\pdfoutput=1
\documentclass[11pt,a4paper]{article}

\usepackage[utf8]{inputenc}


\usepackage{enumitem}

\usepackage{setspace}

\usepackage{amsmath}
\usepackage{amssymb}
\usepackage{amsfonts}
\usepackage{amsthm}

\usepackage{mathrsfs}

\usepackage{array}

\usepackage{xcolor}


\bibliographystyle{abbrv}
\newcommand{\mybibliography}{\bibliography{refs-bibtex}}

\usepackage[page,toc]{appendix}

\usepackage[T1]{fontenc}
\usepackage{lmodern}

\usepackage{upquote}

\usepackage{listings}

\usepackage{longtable}

\usepackage{afterpage}

\usepackage{algpseudocode}
\makeatletter
\algnewcommand{\ParState}[1]{\State\parbox[t]{\dimexpr\linewidth-\the\ALG@thistlm}{\strut #1\strut}}
\makeatother
\algnewcommand{\LineComment}[1]{\vspace{1em}\ParState{\textit{(#1)}}}

\usepackage{tikz,pgfplots}
\usetikzlibrary{fit,arrows,arrows.meta,positioning}
\pgfplotsset{compat=1.3}

\usepackage{rotating}

\usepackage{adjustbox}

\usepackage{hyperref}



\usepackage{floatrow}
\floatsetup[table]{capposition=top}

\newcommand{\code}[1]{%
\begingroup%
\let\oldunderscore\_%
\renewcommand{\_}{\discretionary{\oldunderscore}{}{\oldunderscore}}%
\renewcommand{\-}{\discretionary{\textrm{-}}{}{}}%
\texttt{#1}%
\endgroup}

\newcommand{\define}[1]{\textbf{#1}}


\usepackage[capitalize,noabbrev]{cleveref}

\newcommand{\CC}{\mathbb{C}}
\newcommand{\FF}{\mathbb{F}}
\newcommand{\NN}{\mathbb{N}}

\newcommand{\QQ}{\mathbb{Q}}

\newcommand{\ZZ}{\mathbb{Z}}



\newcommand{\cF}{\mathcal{F}}
\newcommand{\cG}{\mathcal{G}}

\newcommand{\cK}{\mathcal{K}}
\newcommand{\cL}{\mathcal{L}}

\newcommand{\cO}{\mathcal{O}}
\newcommand{\cP}{\mathcal{P}}

\newcommand{\cS}{\mathcal{S}}

\newcommand{\cW}{\mathcal{W}}
\newcommand{\cX}{\mathcal{X}}
\newcommand{\cY}{\mathcal{Y}}
\newcommand{\cZ}{\mathcal{Z}}


\newcommand{\fp}{\mathfrak{p}}

\newcommand{\sU}{\mathscr{U}}





\newcommand{\Gal}{\operatorname{Gal}}
\newcommand{\Factors}{\operatorname{Factors}}
\newcommand{\Orbits}{\operatorname{Orbits}}
\newcommand{\LCM}{\operatorname{lcm}}

\newcommand{\Stab}{\operatorname{Stab}}
\newcommand{\Aut}{\operatorname{Aut}}
\newcommand{\Fix}{\operatorname{Fix}}

\newcommand{\val}{\operatorname{val}}

\newcommand{\orb}{{\operatorname{orb}}}



\newcommand{\isom}{\cong}

\newcommand{\subgrp}{\leq}
\newcommand{\subgrpne}{<}
\newcommand{\sdp}{\rtimes}
\newcommand{\suchthat}{\,:\,}

\newcommand{\parens}[1]{\left(#1\right)}
\newcommand{\braces}[1]{\left\lbrace#1\right\rbrace}
\newcommand{\angles}[1]{\left\langle#1\right\rangle}

\newcommand{\abs}[1]{\left\lvert#1\right\rvert}


\def\comment{}
\def\endcomment{}
\long\def\comment#1\endcomment{}

\theoremstyle{plain}
\newtheorem{lemma}{Lemma}[section]

\theoremstyle{definition}
\newtheorem{algorithm}[lemma]{Algorithm}
\newtheorem{definition}[lemma]{Definition}
\newtheorem{example*}[lemma]{Example} 
  {%
   \pushQED{\qed}\begin{example*}}
  {\popQED\end{example*}}
\theoremstyle{remark}
\newtheorem{remark}[lemma]{Remark}


\makeatletter
\renewcommand*{\verbatim@font}{\ttfamily\fontseries{m}\selectfont}
\makeatother

\lstdefinelanguage{Magma}{
  morekeywords={end,function,intrinsic,procedure,for,while,repeat,until,do,in,if,else,elif,then,error,assert,require,when,where,is,print,printf,vprint,vprintf,time,declare,verbose,type,attributes,return,continue,break,delete,loop},
  morekeywords=[2]{eq,ne,le,lt,ge,gt,cmpeq,cmpne,not,notin,and,or,notsubset,subset,meet,join,diff,sdiff,assigned,eval},
  morekeywords=[3]{sub,ncl,func,proc,ideal,elt},
  morekeywords=[4]{AnyPadExact,StrPadExact,PadExactElt,FldPadExact,FldPadExactElt,RngUPol_FldPadExact,RngUPolElt_FldPadExact,RngMPol_FldPadExact,RngMPolElt_FldPadExact,SetCart_PadExactElt,Tup_PadExactElt,Val_PadExactElt,Val_FldPadElt,Val_RngUPolElt_FldPad,Val_RngMPolElt_FldPad,RngInt,RngIntElt,SetCart,Tup,List,FldNum,FldNumElt,FldRat,FldRatElt,FldPad,FldPadElt,Getter,BoolElt,SetCart_PadExact,Tup_PadExact},
  sensitive=true,
  morecomment=[l]{//},
  morecomment=[s]{/*}{*/},
  morestring=[b]",
}
\lstset{language=Magma, showspaces=false, showstringspaces=false, basicstyle=\ttfamily\fontseries{m}\selectfont, keywordstyle=\fontseries{b}\selectfont, keywordstyle=[4]\em, commentstyle=\color{gray}, emphstyle=\fontseries{m}\selectfont, columns=fullflexible}

\theoremstyle{definition}
\newtheorem*{parameterlist*}{Parameters}

\begin{document}

\title{Computing the Galois group of a polynomial\\over a \(p\)-adic field}
\author{\href{https://cjdoris.github.io}{Christopher Doris} \\ Heilbronn Institute of Mathematical Research \\ \href{mailto:Christopher Doris <christopher.doris@bristol.ac.uk>?subject=Computing the Galois group of a polynomial over a p-adic field}{\texttt{christopher.doris@bristol.ac.uk}}}
\date{March 2020}
\maketitle

\begin{abstract}
We present a family of algorithms for computing the Galois group of a polynomial defined over a \(p\)-adic field. Apart from the ``naive'' algorithm, these are the first general algorithms for this task. As an application, we compute the Galois groups of all totally ramified extensions of \(\mathbb Q_2\) of degrees 18, 20 and 22, tables of which are available online.
\end{abstract}

\section{Introduction}

In this article we consider the following problem, the \(p\)-adic instance of the forward Galois problem: given a \(p\)-adic field \(K\) and a polynomial \(F(x) \in K[x]\) over that field, what is its Galois group \(G := \Gal(F/K)\)?

Over any field for which polynomial factorization algorithms are known, the forward Galois problem can always be solved with the \define{naive algorithm}: explicitly compute the splitting field of \(F\) by repeatedly adjoining a root of it to the base field, and then explicitly compute the automorphisms of the splitting field. To date, there is no general solution to the \(p\)-adic forward Galois problem other than the naive algorithm.

This article presents a general algorithm. In practice, it can for example quickly determine the Galois group of most irreducible polynomials of degree 16 over \(\QQ_2\) and has been used to compute some non-trivial Galois groups at degree 32. It has been tested on polynomials defining all extensions of \(\QQ_2\), \(\QQ_3\) and \(\QQ_5\) of degree up to 12, all extensions of \(\QQ_2\) of degree 14, and all totally ramified extensions of \(\QQ_2\) of degrees 18, 20 and 22, the latter three being new. See \cref{gg-sec-implementation}.

Our implementation is publicly available \cite{galoiscode} and pre-computed tables of Galois groups are available from here also.

\subsection{Overview of algorithm}

Our algorithm uses the ``resolvent method''. We now describe a concrete instance.

Suppose \(F(x) \in \QQ_p[x]\) is irreducible of degree \(d\), and therefore defines an extension \(L/\QQ_p\) of degree \(d\).

The ramification filtration of this extension is a tower \(L_t=L/\ldots/L_0=\QQ_p\). Let \(F_1(x)\in \QQ_p[x]\) be a defining polynomial for \(L_1/\QQ_p\). By Krasner's lemma, any polynomial in \(\QQ[x]\) sufficiently close to \(F_1\) is also a defining polynomial, so we may take \(F_1 \in \QQ[x]\). It is irreducible and so defines the number field \(\cL_1/\cL_0=\QQ\) which has a unique completion embedding into \(L_1\). Repeating this procedure up the tower, we obtain the tower of number fields \(\cL=\cL_t/\ldots/\cL_0=\QQ\) such that \(\cL\) embeds uniquely into \(L\). We call \(\cL/\QQ\) a \define{global model} of \(L/\QQ_p\).

Let \(d_i:=(\cL_i:\cL_{i-1})=(L_i:L_{i-1})\), then \(\Gal(\cL_i/\cL_{i-1}) \subgrp S_{d_i}\) and therefore \(\Gal(\cL/\QQ) \subgrp W := S_{d_t} \wr \cdots \wr S_{d_1}\). Observe also that naturally \(\Gal(L/\QQ_p) \subgrp \Gal(\cL/\QQ)\) since the left hand side is a decomposition group of the right hand side.

Suppose \(\alpha_1 \in \cL\) generates \(\cL/\QQ\), and let \(\alpha_2,\ldots,\alpha_d\in\bar\QQ\) be its \(\QQ\)-conjugates. Suppose we choose some subgroup \(U \subgrp W\), find an \define{invariant} \(I \in \ZZ[x_1,\ldots,x_d]\) such that \(\Stab_W(I) = U\) and compute the \define{resolvent}
\[R(x) = \prod_{wU \in W/U}(t - wU(I)(\alpha_1,\ldots,\alpha_d)) \in \ZZ[t]\]
by finding sufficiently precise complex approximations to \(\alpha_1,\ldots,\alpha_d\), giving a complex approximation to \(R\), whose coefficients we can then round to \(\ZZ\).

One can show that \(\Gal(R/\QQ) = q(\Gal(\cL/\QQ))\) and hence \(\Gal(R/\QQ_p) = q(\Gal(L/\QQ_p)) = q(\Gal(F/\QQ_p))\) where \(q:W\to S_{W/U}\) is the action of \(W\) on the cosets of \(U\).

In particular, if we define \(s(G)\) to be the multiset of the sizes of orbits of the permutation group \(G\), and we let \(S\) be the multiset of the degrees of the factors of \(R\) over \(K\), then \(s(q(\Gal(F/\QQ_p))) = S\).

We compute the set \(\cG\) of all transitive subgroups of \(W\), so that \(\Gal(F/\QQ_p) \in \cG\). If \(\abs{\cG}>1\), we search through the subgroups \(U \subgrp W\) in index order until we find one such that \(\{s(q(G)) \suchthat G \in \cG\}\) contains at least two elements. We then compute the corresponding resolvent \(R(t) \in \ZZ[t]\), factorize it over \(\QQ_p\) and let \(S\) be the multiset of degrees of factors, and replace \(\cG\) by \(\{G\in\cG \suchthat s(q(G)) = S\}\). Observe that \(\cG\) is now strictly smaller than it was before, and we still have \(\Gal(F/\QQ_p) \in \cG\).

We repeat this process until \(\abs{\cG}=1\), at which point this single group is the Galois group and we are done.

In \cref{gg-sec-arm} we describe our precise formulation of this algorithm.

We have described one method of producing a global model, which results in the group \(W\) (relative to which we compute resolvents) being a wreath product of symmetric groups. It is better for \(W\) to be as small as possible, since this will reduce the index \((W:U)\) required, and hence also reduce \(\deg R\). In \cref{gg-sec-glomod} we discuss some other constructions. The best constructions take advantage of the simple structure of the Galois group of a ``singly ramified'' extension, something like \(C_d\) for unramified extensions, \(C_d \sdp (\ZZ/d\ZZ)^\times\) for tame extensions and \(C_p^k \sdp H\) for wild extensions. We can also produce global models for reducible \(F\) using global models for its factors.

In this example, we deduced the Galois group by enumerating the set \(\cG\) of all possibilities and then eliminating candidates. This is the ``group theory'' part of the algorithm. We have other methods which avoid enumerating all subgroups of \(W\), and instead work down the graph of subgroups of \(W\). These are discussed in \cref{gg-sec-groups}.

The function \(s\) taking a group and returning the multiset of sizes of its orbits is a ``statistic'', and there are other choices. These are discussed in \cref{gg-sec-statistic}. Some statistics provide more information than others, and therefore can result in smaller indices \((W:U)\) being required, but this comes at the expense of taking longer to compute.

We search for \(U\) by enumerating all the subgroups of \(W\) of each index in turn until we find one which is useful. There are other methods which try to avoid computing all of these subgroups, of which there may be many. One method restricts to a special class of subgroups. These are given in \cref{gg-sec-choice}.

\subsection{Previous work}

Over \(p\)-adic fields, there are some special cases where Galois groups can be computed.
\begin{itemize}
\item It is well known that the unramified extensions of \(K\) of degree \(d\) are all isomorphic, Galois and have cyclic Galois group \(C_d\). Hence if the irreducible factors of \(F(x)\) all define unramified extensions, then the splitting field of \(F(x)\) is unramified, Galois and cyclic with degree \(\LCM \{\deg g \,:\, g \in \Factors(F)\}\).
\item Suppose \(L/K\) is tamely ramified. Then it has a maximal unramified subfield \(U\), and \(L/U\) is totally (tamely) ramified. It is well known that \(L = U(\sqrt[e]{\zeta^r \pi})\) where \(e = (L:U)\) for some uniformizer \(\pi \in K\), \(\zeta\) a root of unity generating \(U\) and \(r \in \ZZ\). In this special form, it is straightforward to write down the splitting field and Galois group of \(L/K\). Furthermore, it is easy to compute the compositum of tame extensions, and hence if each irreducible factor of \(F(x)\) defines a tamely ramified extension, we can compute its Galois group. See \cite[Ch. II, \S2.2]{DPhD} for an exposition.
\item Greve and Pauli have studied \define{singly ramified} extensions, that is extensions whose ramification polygon has a single face, giving an explicit description of their splitting field and Galois group \cite[Alg. 6.1]{GP}. So in particular if \(F(x)\) is an Eisenstein polynomial whose ramification polygon has a single face, then we can compute its Galois group. An explicit description of this algorithm appears in Milstead's thesis \cite[Alg. 3.23]{Mil}.
\item In his thesis, Greve extends this to an algorithm for \define{doubly ramified} extensions \cite[\S6.3]{GreveTh}, that is whose ramification polygon has two faces. Essentially this uses the singly ramified algorithm for the bottom part, and class field theory and group cohomology to deal with the elementary abelian top part.
\item Jones and Roberts \cite{LFDB} have computed all extensions of \(\QQ_p\) of degree up to 12, including their Galois group and some other invariants. These are available online in the Local Fields Database (LFDB). Some of the methods they use to compute Galois groups will feature in our general algorithm.
\item Awtrey et al. have also considered degree 12 extensions of \(\QQ_2\) and \(\QQ_3\) \cite{AwtreyTh}; degree 14 extensions of \(\QQ_2\) \cite{AwtreyD14}; degree 15 extensions of \(\QQ_5\) \cite{AwtreyD15}; and degree 16 \emph{Galois} extensions of \(\QQ_2\) \cite{AwtreyD16}. The main new idea in these articles is the \define{subfield Galois group content} of an extension \(L/K\): the set of Galois groups of all proper subfields of \(L/K\). This invariant of \(\Gal(L/K)\) is useful in distinguishing between possible Galois groups, and is possible to compute given a database of all smaller extensions.
\end{itemize}

The difficult case appears to be when the factors of \(F\) define wildly ramified extensions whose ramification polygons have many faces.

Recently Rudzinski has developed techniques for evaluating linear resolvents \cite{Rudz} and Milstead has used a combination of these techniques with the ones mentioned above to compute some Galois groups in this difficult class \cite{Mil}.

\subsection{Mathematical notation}

Roman capital letters \(K,L,\ldots\) denote \(p\)-adic fields. The ring of integers of \(K\) is denoted \(\cO_K\), a uniformizer is denoted \(\pi_K\) and the residue class field is denoted \(\FF_K = \cO_K/(\pi_K)\). If \(u \in \cO_K\) then \(\bar u = u+(\pi_K) \in \FF_K\) is its residue class. We denote by \(v_K\) the valuation of \(\bar\QQ_p\) such that \(v_K(\pi_K)=1\).

Calligraphic capital letters \(\cK,\cL,\ldots\) denote number fields. The ring of integers of \(\cK\) is \(\cO_\cK\).

If \(U \subgrp W\) is a subgroup then \(q_U:W \to S_{W/U}\) denotes the action of \(W\) on the left cosets of \(U\).

As introduced in \cref{gg-sec-statistic}, \(s\) denotes a function whose input is a permutation group or a polynomial and whose output is anything. There is an equivalence relation \(\sim\) on outputs such that if \(F(x) \in K[x]\) then \(s(\Gal(F))\sim s(F)\). There may also be a partial ordering \(\preceq\) on outputs such that if \(H \subgrp G\) are groups then \(s(H) \preceq s(G)\).

We may omit subscripts from the notation if they are clear from context.

\subsection{A note on conjugacy}

Recall that the Galois group of a polynomial \(G = \Gal(F)\) is defined to be the group of automorphisms of the splitting field of \(F\). Usually, we represent this as a permutation group \(G \subgrp S_d\) where \(d = \deg(F)\), such that writing the roots of \(F\) as \(\alpha_1,\ldots,\alpha_d\) in some order, then \(G\) acts as \(g(\alpha_i) = \alpha_{g(i)}\).

Since the order of the roots was arbitrary, \(G\) is only really defined up to conjugacy in \(S_d\).

Sometimes, we may know more about the roots of \(F\). For instance, if \(F\) is reducible, then \(G\) has multiple orbits. If we explicitly factorize \(F = \prod_i F_i\), and let \(d_i = \deg(F_i)\), then we can specify that the first \(d_1\) roots \(\alpha_1,\ldots,\alpha_{d_1}\) are the roots of \(F_1\), the next \(d_2\) are the roots of \(F_2\) and so on. Letting \(W = S_{d_1} \times S_{d_2} \times \ldots\) then \(G \subgrp W \subgrp S_d\) is defined up to conjugacy in \(W\). We shall see more examples in \cref{gg-sec-glomod}.

Almost everywhere in our exposition, when we talk of a group, we actually mean the conjugacy class of the group inside some understood larger group. When we talk of the collection of all groups with some property, we mean all the conjugacy classes whose groups have that property. This is to simplify the exposition.

In the implementation, a conjugacy class is usually represented by a representative group. An algorithm which returns all conjugacy classes with some property may actually return several representatives for the same class. Finding which groups generate the same class in order to remove duplicates can be computationally difficult, and so whether or not to do this, and how, is usually parameterised. The default is not to remove duplicates. See \cite[Ch. II, \S 11]{DPhD} for details.

Henceforth, we shall typically only mention conjugacy when we have specific strategies to deal with conjugate groups.

\subsection{Compendium}
\label{gg-sec-tldr}

Most of the rest of this article describes in full detail the possible parameters to our algorithm, of which there are many. We now list the sections with the most important or novel contributions.

\begin{itemize}
\item \Cref{gg-sec-arm}: Describes the resolvent method, the main focus of this article.
\item \Cref{gg-sec-reseval,gg-sec-glomod}: Methods for producing ``global models'' for \(p\)-adic fields, which are used to evaluate resolvents. Our constructions are more general than previous similar efforts and so can produce more efficient models.
\item \Cref{gg-sec-groups-all,gg-sec-groups-maximal2}: The main two ways we perform the group theory part of deducing the Galois group. The former is to write down all possibilities and then eliminate until one remains; the latter works down the graph of possible groups using the notion of ``maximal preimages of statistics'' to efficiently move down the graph without blowing up the number of possibilities.
\item \Cref{gg-sec-stat-facdegs}: The main ``statistic'' of a resolvent we compute is the multiset of degrees of its factors. This is compared to the multiset of sizes of orbits of potential Galois groups to deduce which are possible.
\item \Cref{gg-sec-tranche-oidx}: Methods to produce groups from which to compute resolvents which empirically are both fast to compute and give low-degree resolvents.
\item \Cref{gg-sec-implementation}: The implementation, timings, performance notes, etc.
\end{itemize}

\section{Galois group algorithms}
\label{gg-sec-algorithms}

This article is mainly concerned with the resolvent method, introduced in \cref{gg-sec-arm}. However, the algorithm is recursive, in that it may compute other Galois groups along the way, and it may suffice to use other algorithms for this purpose. Therefore, we briefly describe the other algorithms available in our implementation.

\subsection{\texttt{Naive}}
\label{gg-sec-naive}

This explicitly computes a splitting field for \(F(x)\) and explicitly computes its automorphisms.

This is the algorithm currently implemented in Magma for \(p\)-adic polynomials, called \texttt{GaloisGroup}. Since the splitting field is computed explicitly, this is only suitable when the Galois group is known in advance to be small, such as because the degree is small.

\subsection{\texttt{Tame}}
\label{gg-sec-tame}

As explained in the introduction, if the irreducible factors of \(F(x)\) all generate tamely ramified extensions of \(K\), then its Galois group can be computed directly.

\subsection{\texttt{SinglyRamified}}
\label{gg-sec-singlyramified}

This computes the Galois group of \(F(x)\) provided it is irreducible and defines an extension whose ramification filtration contains a single segment. Such an extension is called \define{singly ramified}.

When the extension is tamely ramified, we can use the \texttt{Tame} algorithm. Otherwise the extension is totally wildly ramified and we use an algorithm due to Greve and Pauli \cite[Alg. 6.1]{GP}. An explicit description is given by Milstead \cite[Alg. 3.23]{Mil}.

\subsection{\texttt{ResolventMethod}}
\label{gg-sec-arm}

The resolvent method is the focus of the remainder of this article and is based on the following simple lemma.

\begin{lemma}
Suppose \(G := \Gal(F) \subgrp W \subgrp S_d\) where \(d = \deg F\), and take any \(U \subgrp W\). Now \(S_d\) acts on \(\ZZ[x_1,\ldots,x_d]\) by permuting the variables, so suppose \(I \in \ZZ[x_1,\ldots,x_n]\) such that \(\Stab_W(I) = U\) (we say \(I\) is a \define{primitive \(W\)-relative \(U\)-invariant}). Letting \(\alpha_1,\ldots,\alpha_d\) be the roots of \(F\), define \(\beta_{wU} = wU(I)(\alpha_1,\ldots,\alpha_n)\) (this is well-defined since \(I\) is fixed by \(U\)) and define the \define{resolvent} \(R(t) := \prod_{wU \in W/U} (t - \beta_{wU})\). Then \(R(t) \in K[t]\). If \(R\) is squarefree, then its Galois group corresponds to the coset action of \(G\) on \(U\).  That is, letting \(q : W \to S_{W/U}\) be the coset action, then identifying \(wU \leftrightarrow \beta_{wU}\) we have \(\Gal(R) = q(G)\).
\end{lemma}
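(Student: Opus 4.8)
\emph{Proof sketch.}
The plan is to track the action of $G$ on the elements $\beta_{wU}$ and read everything off from it. Adopt the convention that $S_d$ acts on $\ZZ[x_1,\ldots,x_d]$ on the left by $(\sigma\cdot P)(x_1,\ldots,x_d)=P(x_{\sigma(1)},\ldots,x_{\sigma(d)})$; then for any $P$ and any $g\in G\subgrp S_d$, because $g$ fixes $\ZZ$ and $g(\alpha_i)=\alpha_{g(i)}$ we get $g\bigl(P(\alpha_1,\ldots,\alpha_d)\bigr)=(g\cdot P)(\alpha_1,\ldots,\alpha_d)$. First I would check that $\beta_{wU}:=(w\cdot I)(\alpha_1,\ldots,\alpha_d)$ depends only on the coset $wU$: if $u\in U=\Stab_W(I)$ then $(wu)\cdot I=w\cdot(u\cdot I)=w\cdot I$. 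The one genuine computation is then
\[g(\beta_{wU})=g\bigl((w\cdot I)(\alpha_1,\ldots,\alpha_d)\bigr)=\bigl(g\cdot(w\cdot I)\bigr)(\alpha_1,\ldots,\alpha_d)=\bigl((gw)\cdot I\bigr)(\alpha_1,\ldots,\alpha_d)=\beta_{gwU},\]
so $G$ permutes the family $(\beta_{wU})_{wU\in W/U}$, and under the labelling $wU\leftrightarrow\beta_{wU}$ the element $g$ acts by the permutation $wU\mapsto gwU$, which is exactly $q(g)$.

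For $R(t)\in K[t]$: let $M$ be the splitting field of $F$ over $K$; since $K$ has characteristic zero this is a Galois extension, and we identify $G$ with $\Gal(M/K)$ acting on the roots as above, so that $M^G=K$. Each coefficient of $R(t)=\prod_{wU}(t-\beta_{wU})$ is, up to sign, an elementary symmetric function of the family $(\beta_{wU})_{wU\in W/U}$; by the previous paragraph every $g\in G$ merely permutes this family and hence fixes each coefficient, so the coefficients lie in $M^G=K$. Note this part uses no hypothesis on $R$.

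Now assume $R$ is squarefree. As $R=\prod_{wU}(t-\beta_{wU})$ has degree $(W:U)$, squarefreeness is equivalent to the $\beta_{wU}$ being pairwise distinct, so $W/U$ labels the root set of $R$ bijectively and $\Gal(R)$ acts faithfully on it. Let $M'=K(\{\beta_{wU}\suchthat wU\in W/U\})\subgrp M$ be the splitting field of $R$; then $\Gal(R)=\Gal(M'/K)$, and by the Galois correspondence (with $M'/K$ normal, being a splitting field) the restriction map $\rho\colon G=\Gal(M/K)\to\Gal(M'/K)$ is surjective. The composite $G\xrightarrow{\ \rho\ }\Gal(M'/K)\into S_{W/U}$ --- where the last map is the faithful action on the roots of $R$, labelled by $W/U$ via $\beta_{wU}\leftrightarrow wU$ --- sends $g$ to the permutation $wU\mapsto gwU$, i.e.\ to $q(g)$, by the displayed computation. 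Since the first map is surjective and the second injective, the image of $\Gal(M'/K)$ in $S_{W/U}$ is exactly $q(G)$; that is, $\Gal(R)=q(G)$.

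The argument is essentially bookkeeping, and I expect the only real pitfall to be the action conventions: one must fix the left/right conventions and the substitution order so that $g(\beta_{wU})=\beta_{gwU}$ rather than, say, $\beta_{wg^{-1}U}$ or some conjugate --- this is what pins $\Gal(R)$ down as $q(G)$ on the nose. One should also not forget the standing assumption that $F$ is separable, which is what makes $M/K$ Galois so that $M^G=K$ and the Galois correspondence apply, and keep in mind that the role of the squarefreeness of $R$ is precisely to make $\{\beta_{wU}\}$ a set of size $(W:U)$, so that the faithful permutation representation of $\Gal(R)$ genuinely lands in $S_{W/U}$.
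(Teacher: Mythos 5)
Your proposal is correct and follows essentially the same route as the paper: the same key computation $g(\beta_{wU})=\beta_{gwU}$, the same observation that $G$ permutes the family $(\beta_{wU})$ and hence fixes the coefficients of $R$, and the same identification of the root action with the coset action (you merely spell out the restriction-to-splitting-field bookkeeping that the paper leaves implicit).
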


\begin{proof}
Writing \(R(t) := \tilde R(\alpha_1,\ldots,\alpha_d; t)\) where \[\tilde R(x_1,\ldots,x_d; t) := \prod_{wU \in W/U} (t - wU(I)(x_1,\ldots,x_d))\] then the \(t\)-coefficients of \(\tilde R\) are fixed by \(W\) (the action of \(W\) re-orders the product) and hence by \(G\). We conclude that the \(t\)-coefficients of \(R\) are fixed by \(G\) too, and hence by Galois theory \(R(t) \in K[t]\).

If \(R\) is squarefree, then there is a 1-1 correspondence between the cosets \(\braces{wU}\) of \(W/U\) and the roots \(\braces{\beta_{wU}}\) of \(R\). Take \(g \in G\), then
\begin{align*}
g(\beta_{wU}) &= g(wU(I)(\alpha_1,\ldots,\alpha_d)) \\
&= wU(I)(g(\alpha_1),\ldots,g(\alpha_d))) \\
&= wU(I)(\alpha_{g(1)},\ldots,\alpha_{g(d)}) \\
&= gwU(I)(\alpha_1,\ldots,\alpha_d) \\
&= \beta_{gwU} \\
\end{align*}
so the action of \(G\) on the roots of \(R\) corresponds to the coset action, as claimed.
\end{proof}

Therefore, if we have some \(W\) containing \(G\) and a means to compute resolvents \(R\) for \(U \subgrp W\), then since \(\Gal(R) = q(G)\) is a function of \(G\), we can deduce information about \(G\) by finding some information about \(\Gal(R)\). Specifically how we compute resolvents and deduce information about \(G\) is controlled by two parameters.

Firstly, a resolvent evaluation algorithm (\cref{gg-sec-reseval}) selects a fixed group \(W \leq S_d\) such that \(G \leq W\), and thereafter is responsible for evaluating the resolvents \(R(t)\) from selected \(U \leq W\) and invariants \(I \in \ZZ[x_1,\ldots,x_d]\).

Secondly, a group theory algorithm (\cref{gg-sec-groups}) is responsible for deducing the Galois group \(G\) by choosing a suitable \(U\), and then using the resolvent \(R\) returned by the resolvent evaluation algorithm to gather information about \(G\).

\begin{algorithm}[Galois group: resolvent method] Given a polynomial \(F(x) \in K[x]\), returns its Galois group.\hfill
\label{gg-alg-arm}
\begin{algorithmic}[1]
\State Initialize the resolvent evaluation algorithm.\label{gg-alg-arm-rinit}
\State Initialize the group theory algorithm.\label{gg-alg-arm-ginit}
\State If we have determined the Galois group, then return it.\label{gg-alg-arm-done}
\State Let \(U\) be a subgroup of \(W\).\label{gg-alg-arm-U}
\State Let \(I\) be a primitive \(W\)-relative \(U\)-invariant.\label{gg-alg-arm-I}
\State Let \(R\) be the resolvent corresponding to \(I\).\label{gg-alg-arm-R}
\State Use \(R\) to deduce information about the Galois group.\label{gg-alg-arm-deduce}
\State Go to step \ref{gg-alg-arm-done}.
\end{algorithmic}
\end{algorithm}

The resolvent algorithm controls steps \ref{gg-alg-arm-rinit} and \ref{gg-alg-arm-R}. The group theory algorithm controls steps \ref{gg-alg-arm-ginit}, \ref{gg-alg-arm-done}, \ref{gg-alg-arm-U} and \ref{gg-alg-arm-deduce}. Step \ref{gg-alg-arm-I} could also be parameterised, but we find it is sufficient to use the algorithm due to Fieker and Kl\"uners \cite[\S5]{FK}, implemented as the intrinsic \texttt{RelativeInvariant} in Magma.

\begin{remark}
Using resolvents to compute Galois groups is not new. Stauduhar's method \cite{Stauduhar73} for polynomials over \(\QQ\) computes resolvents relative to \(S_d\) by computing complex approximations to the roots. This was improved by Fieker and Kl\"uners \cite{FK} to a ``relative resolvent method'' which allows the overgroup \(W\) to be made smaller at each iteration until it equals \(G\). Over \(\QQ_p\), a resolvent method has been used by Jones and Roberts \cite{LFDB} to compute the Galois group of fields of degree up to 12, computing resolvents in \(W = S_{d_2} \wr S_{d_1}\) corresponding to a subfield of degree \(d_1\).
\end{remark}

\subsection{\texttt{Sequence}}

This algorithm takes as parameters a sequence of other algorithms to compute Galois groups. It tries each algorithm in turn until one succeeds. This is mainly useful to deal with special cases first (e.g. \texttt{Tame} or \texttt{SinglyRamified}) before applying a general method (e.g. \texttt{ResolventMethod}).

\section{Resolvent evaluation algorithms}
\label{gg-sec-reseval}

These are used as part of the \texttt{ResolventMethod} algorithm for computing Galois groups. They are responsible for selecting an overgroup \(W\) such that \(G \subgrp W\) and thereafter evaluating resolvents relative to \(W\).

Currently there is one option, \code{Global}, described here.

\begin{definition}
A \define{global model} for a \(p\)-adic field \(K\) is an embedding \(i : \cK \to K\) where \(\cK\) is a global number field such that \(K\) is a completion of \(\cK\) and \(i\) is the corresponding embedding.

If \(L/K\) is an extension of \(p\)-adic fields, and \(i : \cK \to K\) is a global model for \(K\), then a \define{global model for \(L/K\) extending \(i\)} is a global model \(j : \cL \to L\) of \(L\) such that \(j|_{\cK} = i\).

Similarly a \define{global model for \(F(x) \in K[x]\) extending \(i\)} is \(\prod_k \cF_k\) where \(F = \prod_k F_k\) is the factorization over \(K\) of \(F\) into irreducible factors, \(L_k/K\) are the corresponding extensions, \(i_k : \cL_k \to L_k\) are global models for \(L_k/K\) extending \(i\), and \(\cL_k \isom \cK(x)/(\cF_k(x))\).

We shall often refer to \(\cK\) itself as the global model, instead of the embedding \(i\).
\end{definition}

The \texttt{Global} algorithm computes a global model \(\cK\) for \(K\) and a global model \(\cF(x) \in \cK[x]\) for the input \(F(x) \in K[x]\) extending \(\cK\). At the same time, it computes the required overgroup \(W\) such that \(G \subgrp \Gal(\cF / \cK) \subgrp W\). A parameter (a global model algorithm, \cref{gg-sec-glomod}) specifies how to produce a global model for \(F(x)\).

\begin{remark}
\label{gg-rmk-glomodidx}
Note that this implies that \(\deg\cF=\deg F=d\). In fact, our algorithm more generally computes an \define{overgroup embedding} \(e:W\to\cW\) such that \(G\subgrp W\), \(\Gal(\cF/\cK)\subgrp\cW\) and \(e(G)\) is the corresponding decomposition group. Hence \(\deg\cF>d\) is allowed. This usually arises as a global model \(\cL/\cK'/\cK\) for \(L/K\) where \(\cK'\) is also a global model for \(K\) and \((\cL:\cK')=d\), in which case we refer to \((\cK':\cK)\) as the \define{index} of the global model. In our exposition we shall assume \(W=\cW\) for simplicity and leave the details to \cite[Ch. II]{DPhD}.
\end{remark}

The algorithm then can evaluate resolvents as follows. For each complex embedding \(c : \cK \to \CC\), we compute the roots of \(c(\cF)\) to high precision. Letting \(\tilde\alpha_1,\ldots,\tilde\alpha_{d'}\) be these roots, we compute \[\tilde R_c(t) := \prod_{wU \in W/U}(t - wU(I)(\tilde\alpha_1,\ldots,\tilde\alpha_{d'}))\] which is an approximation to \(c(R(t)) \in \CC[t]\).

We can always arrange for \(\cF(x)\) to be monic and integral, so that its roots are integral, and therefore \(R(t) \in \cO_{\cK}[t]\). Firstly, suppose that \(\cK = \QQ\) (so \(K = \QQ_p\)), then we know \(R(t) \in \ZZ[t]\) and therefore assuming we have computed \(\tilde R(t)\) sufficiently precisely, then we can compute \(R(t)\) by rounding its coefficients to the nearest integer.

More generally, for each coefficient \(R_i\) of \(R(t)\) we take the vector \((\tilde R_{c,i})_c\) which should be a close approximation to \((c(R_i))_c\). Since \(R_i\) are integral, \((c(R_i))_c\) is an element of the \define{Minkowski lattice} \(\prod_c c(\cO_\cK)\), which is discrete, and therefore we can deduce \(R_i\) by rounding \((\tilde R_{c,i})_c\) to the nearest point in the lattice. This can be done using lattice basis reduction techniques such as LLL.

\begin{algorithm}[Resolvent: \texttt{Global}] Given a global model \(\cF(x) \in \cK[x]\) and subgroup \(U \subgrp W\), returns the corresponding resolvent \(R(t)\).
\label{gg-alg-resolvent}
\begin{algorithmic}[1]
\State Choose a Tschirnhaus transformation \(T \in \ZZ[x]\) (see Rmk. \ref{gg-rmk-resolvent-tschirnhaus}).
\State Choose a complex floating point precision, \(k\) decimal digits (see Rmk. \ref{gg-rmk-resolvent-complex-precision}).
\State Compute complex approximations to the roots of \(c(\cF)\) for each complex embedding \(c : \cK \to \CC\).
\State Compute \(\tilde R_c(t) = \prod_{wU \in W/U} (t - wU(I)(T(\tilde \alpha_1),\ldots,T(\tilde \alpha_{d'})))\).
\State Round \((\tilde R_{c,i})_i\) to the nearest point of the Minkowski lattice of \(\cO_{\cK}\), and let \(R_i\) be the corresponding element of \(\cO_{\cK}\).
\State If \(R(t) \in \cK[t]\) is not squarefree, go to Step 1.
\State Return \(R(t)\).
\end{algorithmic}
\end{algorithm}

\begin{remark}
\label{gg-rmk-resolvent-tschirnhaus}
In Step 1, a Tschirnhaus transformation is any randomly selected polynomial in \(\ZZ[x]\). Its purpose is to ensure that \(R(t)\) is squarefree. Indeed, if \(R(t)\) is not squarefree, then there is some coincidence between its roots, and therefore some unintended structure between the roots of \(F\). By transforming the roots, we should destroy this structure.

Such a transformation always exists \cite{Girstmair83}. In practice, it suffices to use \(T(x) = x\) initially, and thereafter to choose a random polynomial of small degree and coefficients, increasing the degree and coefficient bound at each iteration.
\end{remark}

\begin{remark}
\label{gg-rmk-resolvent-complex-precision}
It is important in Step 2 that we choose a complex floating point precision \(k\) such that the rounding step produces the correct answer. We do this as follows.

First, we find an upper bound on the absolute valuations of the roots of \(c(\cF)\) for each complex embedding \(c\). In principle this could be done by analyzing the polynomials which define the global model and bounding their roots in terms of the coefficients, but in our current implementation we instead compute the complex roots to some default precision (30 decimal digits) and take the size of the largest root as our bound. It is possible although unlikely that the latter approach introduces enough precision error that this bound is incorrect, and hence this part of the implementation does not yield proven results.

Using this upper bound, we can follow through the computation of \(\tilde R_c\) to get upper bounds on its coefficients. By increasing the bounds by a small fraction at each computation, we can absorb the effect of any complex precision error. We then select a precision so that the absolute errors on the coefficients \(\tilde R_{c,i}\) are less than half the shortest distance between two elements of the Minkowski lattice. We then add a generous margin to the precision (say 20 decimal digits) so that we can check in the code that we are in fact very close (say within 10 decimal digits) of an integer point.
\end{remark}

\begin{remark}
The choice to approximate the roots of \(\cF\) in the complex field \(\CC\) is somewhat arbitrary. We could instead pick a prime \(\ell\) such that \(\cF\) has a small splitting field over \(\QQ_\ell\) and approximate the roots \(\ell\)-adically. Making such a change usually improves the reliablility and precision requirements. The theory of the Minkowski lattice carries over into this setting.
\end{remark}

\section{Global model algorithms}
\label{gg-sec-glomod}

Given a polynomial \(F(x) \in K[x]\) and a global model \(i:\cK \to K\), a global model algorithm computes a global model \(\cF(x)\) for \(F(x)\) extending \(\cK\). It also computes an overgroup \(W\) such that \(G \subgrp \Gal(\cF / \cK) \subgrp W\).

\begin{remark}
As presented, these constructions assume the global model index (\cref{gg-rmk-glomodidx}) is 1, but do generalize. See \cite[Ch. II, \S4]{DPhD} for details.
\end{remark}

\subsection{\texttt{Symmetric}}

Given irreducible \(F(x) \in K[x]\), this finds a polynomial \(\cF(x) \in \cK[x]\) sufficiently close to \(F(x)\) that they have the same splitting field over \(K\). Generically we expect that \(\Gal(\cF / \cK) = S_d\), since we are not imposing any further restriction of \(\cF\), and therefore the corresponding overgroup is taken to be \(W=S_d\).

To find such a polynomial, we pick some precision parameter \(k \in \NN\). We take some polynomial \(\cF(x) \in \cK[x]\) such that \(i(\cF(x)) - F(x)\) has coefficients of valuation at least \(k\), and then we check that \(\cF\) is a global model. If not, we increase \(k\). By keeping \(k\) small, we limit the size of the coefficients of \(\cF\), which in turn limits the precision required in the complex arithmetic later.

\subsection{\texttt{Factors}}

This factorizes \(F(x) = \prod_k F_k(x)\) into irreducible factors over \(K\), produces a global model \(\cF_k(x)\) for each factor, and then the global model is \(\cF(x) = \prod_k \cF_k(x)\). The overgroup is the direct product \(W=\prod_k W_k\) of overgroups for each factor.

A parameter determines how to compute a global model for each factor.

\subsection{\texttt{RamTower}}
\label{gg-sec-ramtower}

Assuming \(F(x)\) is irreducible and defines an extension \(L/K\), this finds the ramification filtration \(L=L_t/\ldots/L_0=K\) of \(L/K\). For each segment \(L_k/L_{k-1}\), it produces a global model extending the global model of the segment below it. Then the global model is the final model in this iteration. The overgroup is the wreath product \(W=W_t\wr\cdots\wr W_1\) of overgroups of each segment.

A parameter determines how to compute a global model for each segment.

\subsection{\texttt{RootOfUnity}}
\label{gg-sec-rootofunity}

Assuming the splitting field \(L\) of \(F\) over \(K\) is unramified, and therefore generated by a primitive \(n\)th root of unity \(\zeta\), we define the global model to be \(\cL = \cK(\zeta)\).

We naturally identify \(\cW = \Gal(\cL / \cK)\) with a subgroup of \((\ZZ / n \ZZ)^\times\), identifying \(i \bmod n\) with \(\zeta \mapsto \zeta^i\). The subgroup \(W = \angles{q} \subgrp \cW\) is the decomposition group, i.e. \(\Gal(L/K)\). If \(W=\cW\) then this is our overgroup (otherwise \(\cW\) is an overgroup for a model of higher index \cite[Ch. II, \S4.5]{DPhD}).

By default, we use \(n=q^d-1\). A parameter can change this to use the smallest divisor of \(q^d-1\) not dividing \(q^c-1\) for any \(c<d\).

Another parameter controls whether to search for a \define{complement} to \(W\) --- i.e. a subgroup \(H \subgrp \cW\) such that \(H \cap W = 1\) --- of smallest index possible, and then replace \(\cL\) by the fixed field of \(H\). By design, this still has a completion to \(L\), but is of smaller degree. If \(\angles{H,W}=\cW\) then \(H\) is a \define{perfect complement} and \(W=\cW/H\) is our overgroup (otherwise \(\cW/H\) is an overgroup for a model of higher index).

\begin{remark}
The complement option usually finds a perfect complement. For example, suppose \(\cK = \QQ\) and \(K = \QQ_p\), \(p \leq 7\) and \(d \leq 50\), then there is a perfect complement unless: \(p=2\) and \(8 \mid d\); or \(p=3\) and \(d=9\); or \(p=7\) and \(d\in\braces{5,8}\).
\end{remark}

\begin{remark}
\label{gg-rmk-grunwaldwang}
The Grunwald--Wang theorem of class field theory \cite[Ch. X, \S2]{ATCFT} implies that if \(K\) is a completion \(\cK_\fp\), and \(L/K\) is cyclic, degree \(d\), then there is \(\cL / \cK\) cyclic of degree \(d\) which completes to \(L\). There is an exception at primes \(\fp \mid 2\) and degrees \(8 \mid d\), for which \((\cL : \cK) = 2d\) is sometimes necessary.
\end{remark}

\subsection{\texttt{RootOfUniformizer}}
\label{gg-sec-rootofuniformizer}

Assuming \(F\) is irreducible of degree \(d\) over \(K\) and defines a totally tamely ramified extension \(L/K\), then  \(L=K(\sqrt[d]{\pi})\) for some uniformizer \(\pi \in K\). Taking a sufficiently precise approximation to \(\pi\), we may assume that \(\pi \in \cK\), and we define the global model to be \(\cL = \cK(\sqrt[d]{\pi})\). The embedding \(\cK \to K\) extends uniquely to \(\cL \to L\).

Letting \(\zeta\) be a primitive \(d\)th root of unity, then clearly \(\cK(\sqrt[d]{\pi},\zeta)\) is the normal closure and its Galois group \(W\) (which is a function of \(\Gal(\cK(\zeta)/\cK)\) which may be computed explicitly) acts faithfully on the \(d\) elements \(\sqrt[d]{\pi}\), \(\zeta \sqrt[d]{\pi}\), \(\ldots\), \(\zeta^{d-1} \sqrt[d]{\pi}\).

\subsection{\texttt{SinglyWild}}
\label{gg-sec-singlywild}

Suppose \(F(x) \in K[x]\) defines a singly wildly ramified extension \(L/K\) of degree \(d=p^k\). That is, a totally wildly ramified extension whose ramification polygon has a single face. 

Suppose also \(p=2\) and \(L/K\) is Galois, then \(\Gal(L/K)\isom C_2^k\) and so \(L = K(\sqrt{a_1},\ldots,\sqrt{a_k})\) for some \(a_i\in K\). By taking sufficiently precise approximations, we may further assume \(a_i\in\cK\). Then \(\cL=\cK(\sqrt{a_1},\ldots,\sqrt{a_k})\) is our global model with overgroup \(W=C_2^k\).

\begin{remark}
Using Kummer theory, an averaging argument, and a result of Greve \cite[Thm. 7.3]{GP}, this method generalizes to \(p\ne2\) and non-Galois \(L/K\) \cite[Ch. II, \S4.7]{DPhD}. This has not yet been implemented.
\end{remark}

\subsection{\texttt{Select}}

This selects between several different global model algorithms, depending on \(F\). For example, we can select between \texttt{RootOfUnity}, \texttt{RootOfUniformizer} or \texttt{SinglyWild} depending on whether \(F\) defines an unramified, tame, or wild extension.

\section{Group theory algorithms}
\label{gg-sec-groups}

The job of a group theory algorithm is to decide, given the overgroup \(W\), which subgroups \(U \subgrp W\) to form resolvents from, and to use those resolvents to deduce the Galois group \(G \subgrp W\).

We recommend now reading the definition of statistic at the start of \cref{gg-sec-statistic}. A statistic is our means of comparing groups with resolvents.

\subsection{\texttt{All}}
\label{gg-sec-groups-all}

This algorithm proceeds by writing down all possible Galois groups \(G\) (up to \(W\)-conjugacy), and then eliminating possibilities until only one remains. 

There are two parameters, a statistic algorithm \(s\) (\cref{gg-sec-statistic}) which determines which properties of the Galois groups \(G\) and resolvents \(R\) to compare, and a subgroup choice algorithm (\cref{gg-sec-choice}) which determines how we choose a subgroup \(U\).

The subgroup choice algorithm is used to choose a subgroup \(U\). Then, given a resolvent \(R\), we compute the statistic \(s(R)\) and see for which \(G\) in the list of possible Galois groups this equals \(s(q(G))\) where \(q\) is the coset action of \(W\) on \(W/U\). We eliminate the \(G\) for which the statistics differ. We are done when only one \(G\) remains.

\begin{remark}
The parameters must be chosen correctly to ensure that the algorithm terminates, otherwise it is possible that the subgroup choice algorithm cannot find a useful subgroup for the given statistic. \Cref{gg-lem-rootsmax} below implies the algorithm terminates for the \texttt{HasRoot} statistic (or any more precise statistic such as \texttt{FactorDegrees}) and any subgroup choice algorithm which considers all groups.
\end{remark}

\begin{lemma}
\label{gg-lem-rootsmax}
\(G\) is congruent to a subgroup of \(U\) if and only if the corresponding resolvent \(R\) has a root.
\end{lemma}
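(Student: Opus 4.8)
The plan is to combine the lemma established at the start of \cref{gg-sec-arm} --- which, when $R$ is squarefree, identifies $\Gal(R)$ with the coset action $q(G)$ on $W/U$ --- with the elementary fact that a squarefree polynomial over $K$ has a root in $K$ exactly when its Galois group has a fixed point, and then to unwind what a fixed point of $q(G)$ says about $G$ and $U$. Throughout, ``$R$ has a root'' means a root in the base field $K$, equivalently a linear factor over $K$.

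First I would record that $R$ may be assumed squarefree: this is guaranteed by the Tschirnhaus step of \cref{gg-alg-resolvent}, and in any case is the standing hypothesis under which $\Gal(R) = q(G)$ was proved. Hence the roots $\beta_{wU}$ of $R$ are in bijection with the left cosets $W/U$, and under the identification $wU \leftrightarrow \beta_{wU}$ each $g \in G$ acts by $\beta_{wU} \mapsto \beta_{gwU}$, i.e.\ through $q$. By standard Galois theory, $R$ has a root in $K$ iff $q(G)$, acting on the roots, has an orbit of size one, i.e.\ fixes some coset $wU$. Now $q(g)(wU) = wU$ for all $g \in G$ means $gwU = wU$ for all $g$, i.e.\ $w^{-1}gw \in U$ for all $g$, i.e.\ $w^{-1}Gw \subgrp U$. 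Therefore $R$ has a root iff some $W$-conjugate of $G$ lies in $U$, that is, iff $G$ is congruent to a subgroup of $U$; both implications of the lemma follow at once.

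The proof requires no real computation; the points needing care are keeping the side of the coset action straight (left cosets under left multiplication, so that the stabiliser of $wU$ is $wUw^{-1}$ and the conjugating element lands on the correct side) and being explicit that ``root'' means a root in $K$ rather than in $\bar K$. It is also worth a remark that the conclusion is independent of the Tschirnhaus transformation chosen, since $\Gal(R) = q(G)$ for any choice making $R$ squarefree; this is what makes the ``go to Step 1'' loop in \cref{gg-alg-resolvent} harmless for this purpose and, as the preceding remark notes, underlies termination of the \texttt{All} algorithm with the \texttt{HasRoot} statistic.
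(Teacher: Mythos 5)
Your proposal is correct and follows essentially the same route as the paper's proof: both reduce the statement to the equivalence ``$q(G)$ has a fixed point iff $G$ is ($W$-conjugate to) a subgroup of $U$'' combined with $\Gal(R)=q(G)$ from the resolvent lemma. Your version merely spells out the coset-stabiliser computation and the squarefreeness hypothesis that the paper leaves implicit.
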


\begin{proof}
\(G \subgrp U\) if and only if \(q(G)\) has a fixed point, where \(q:W \to S_{W/U}\) is the coset action. Since \(\Gal(R)=q(G)\), this occurs if and only if \(R\) has a root.
\end{proof}

\subsection{\texttt{Maximal}}
\label{gg-sec-groups-maximal}

This algorithm avoids the need to enumerate all possible Galois groups. We start at the top of the directed acyclic graph of subgroups of \(W\) and work our way down, at each stage either proving that a current group under consideration is not the Galois group, and so moving on to its maximal subgroups, or proving that the Galois group is not a subgroup of some of the maximal subgroups of a group under consideration.

Specifically, at all times we have a set \(\cP\) of subgroups of \(W\) such that we know that the Galois group is contained in at least one of them. We call this the \define{pool}. Initially we have \(\cP = \{W\}\). If for some resolvent \(R\) and \(P \in \cP\) we find that their statistics do not agree, i.e. \(s(R) \not\sim s(q(P))\), then we record that \(G \ne P\). We also test if the statistic is consistent with the Galois group being a subgroup of \(P\). If this latter test fails, i.e. \(s(R) \not\preceq s(q(P))\), then we remove \(P\) from the pool. We also perform the same tests on all maximal subgroups \(Q \subgrpne P \in \cP\).

Having processed a resolvent in this way, we may decide to modify \(\cP\) further. For example, as soon as there is some \(P \in \cP\) such that the Galois group is not \(P\), replace \(P\) by its maximal subgroups. Or instead, when all \(P \in \cP\) are known not to be the Galois group, replace the whole pool by the set of maximal subgroups of its elements. This behaviour is parameterised.

We have determined the Galois group when \(\cP\) contains one group, and we have deduced that the Galois group is not contained in any of its maximal subgroups.

The question remains of which subgroups \(U\subgrp W\) are \define{useful} in the sense that a resolvent formed from \(U\) will provide information. Unlike the \code{All} algorithm, it is not possible to determine for certain if a given group \(U\) will allow us to make progress or not. There is a necessary condition, but this does not guarantee progress, and there is a sufficient condition, but it is not guaranteed there there exists a group with this condition. We parameterise this choice, but in the next section give an improved method without this issue.

\subsection{\texttt{Maximal2}}
\label{gg-sec-groups-maximal2}

Note that a shortcoming of the \texttt{Maximal} algorithm is that it is not always possible to tell if a subgroup \(U \subgrp W\) will provide any information, and so its behaviour is more heuristic than principled. Another problem is that it only ever rules groups out of consideration which cannot contain the Galois group, and therefore all groups \(P\) with \(G \subgrp P \subgrp W\) will be considered in the pool \(\cP\) at some point; if there are many such groups, this can get inefficient. The \texttt{Maximal2} algorithm avoids both of these problems by positively identifying groups which do contain the Galois group.

As before, we have a pool \(\cP\) of subgroups, at least one of which contains the Galois group. Suppose there is a group \(U \subgrp W\) such that \(s(q(P)) \not\sim s(q(Q))\) for some \(P \in \cP\) and maximal \(Q \subgrpne P\) (such a group is \define{useful}) and we form the corresponding resolvent \(R\). There are two possibilities.

If \(s(R) \sim s(q(P))\) then \(s(q(Q)) \prec s(R)\), so \(s(R) \not\preceq s(q(Q))\), so \(G \not\subgrp Q\), and so we can rule \(Q\) out of consideration.

Otherwise \(s(R) \not\sim s(q(P))\) and so \(G \ne P\). In the \texttt{Maximal} algorithm at this point we would do something like replace \(P\) in the pool by its maximal subgroups. Instead, we find the set \(X''\) of subgroups \(Q'' \subgrpne q(P)\) which are maximal among those such that \(s(Q'') \sim s(R)\); we refer to these as the \define{maximal preimages in \(q(P)\) of \(s(R)\)}. Then we let \(X = \{P \cap q^{-1}(Q'') \suchthat Q'' \in X''\}\). By construction, if \(G \subgrp P\) then \(G \subgrp Q'\) for some \(Q' \in X\) and so we can replace \(P\) in the pool by \(X\). Typically \(X\) is much smaller than the number of maximal subgroups of \(P\).

Suppose now that we have eliminated all maximal subgroups of all \(P \in \cP\) from consideration. Then we know that \(G=P\) for some \(P \in \cP\). We are now in the scenario of the \texttt{All} algorithm, and so can now eliminate groups from the pool by finding \(U \subgrp W\) such that \(s(q(P_1)) \not\sim s(q(P_2))\) for some \(P_1,P_2 \in \cP\). Such a \(U\) is also said to be \define{useful}.

We have deduced the Galois group when the pool contains a single group, and we have ruled all of its maximal subgroups out of consideration.

We can use any statistic which has an equivalence relation (as required for \texttt{All}) and a partial ordering (as required for \texttt{Maximal}) and an algorithm for computing maximal preimages. For the latter, in general we have a ``naive'' algorithm, which simply works down the subgroups of \(P\) until ones with the correct statistic are found.

\begin{algorithm}[Maximal preimages: Naive]
\label{gg-alg-maxpre-naive}
Given a group \(P\), a statistic \(s\) and a value \(v\) of \(s\), returns the maximal preimages of \(v\) in \(P\).
\begin{algorithmic}[1]
\If{\(v \sim s(P)\)}
  \State\Return \(\{P\}\)
\ElsIf{\(v \prec s(P)\)}
  \State\Return \(\bigcup_{\text{maximal \(Q \subgrpne P\)}} \text{maximal preimages of \(v\) in \(Q\)}\)
\Else
  \State\Return \(\emptyset\)
\EndIf
\end{algorithmic}
\end{algorithm}

However, only using the naive algorithm would not provide an improvement over \code{Maximal}. The real efficiency gain comes from the existence of more efficient algorithms for particular statistics, in particular \texttt{HasRoot} (\cref{gg-sec-stat-hasroot}) and \texttt{FactorDegrees} (\cref{gg-sec-stat-facdegs}).

\subsection{\texttt{Sequence}}

This takes as parameters a sequence of group theory algorithms. Each one is used in turn until either the Galois group is deduced or the subgroup choice algorithm runs out of subgroups to try.

If the same algorithm appears consecutively with different parameters, then the state of the algorithm (such as the pool of possible Galois groups) is maintained so that information is not lost.

This allows us, for example, to first use a cheap statistic on a limited number of subgroups --- aiming to deduce easy Galois groups quickly --- before trying a more expensive statistic.

\section{Statistic algorithms}
\label{gg-sec-statistic}

A statistic algorithm is a means of comparing the Galois group of a polynomial with a permutation group. Specifically it is a function which takes as input a permutation group or a polynomial and outputs some value. There must be an equivalence relation on these values, which we denote \(\sim\). A statistic function \(s\) must satisfy the following property: \(s(R) \sim s(\Gal(R))\) for all polynomials \(R\). For most statistics, \(\sim\) is equality.

Using this, if we are given a polynomial \(R(x)\) (such as a resolvent) and a permutation group \(G\) and we find that \(s(R) \not\sim s(G)\), then we know that \(\Gal(R) \neq G\). This is the basis of the \texttt{All} (\cref{gg-sec-groups-all}) group theory algorithm.

Optionally, statistics can also support a partial ordering, denoted \(\preceq\), which must respect the partial ordering due to subgroups. Specifically, the following must hold: for all groups \(G,H\), if \(H \subgrp G\) then \(s(H) \preceq s(G)\). Statistics supporting this operation may be used in the \texttt{Maximal} (\cref{gg-sec-groups-maximal}) and \texttt{Maximal2} (\cref{gg-sec-groups-maximal2}) group theory algorithms.

Optionally, ordered statistics can also provide a specialised algorithm to compute maximal preimages, as defined in \cref{gg-sec-groups-maximal2}.

\subsection{\texttt{HasRoot}}
\label{gg-sec-stat-hasroot}

\(s(G)\) is true if it has a fixed point, and otherwise is false. Correspondingly, \(s(R)\) is true if it has a root (in its base field \(K\)).

If \(H \leq G\) and \(G\) has a fixed point, then so does \(H\), so we define \(v_1 \preceq v_2\) to be \(v_2 \implies v_1\).

The maximal subgroups with a fixed point are point stabilizers. Two point stabilizers are conjugate if they stabilize a point in the same orbit, and so we deduce the following algorithm to compute maximal preimages.

\begin{algorithm}(Maximal preimages: \texttt{HasRoot})
\label{gg-alg-maxpre-hasroot}
Given a group \(P\) and a value \(v \in \{\text{true},\text{false}\}\), returns the maximal preimages of \(v\) in \(P\).
\begin{algorithmic}[1]
\If{\(v = \text{true}\)}
  \State\Return \(\{\Stab_P(x) \text{ for some \(x \in o\)} \suchthat o \in \Orbits(P)\}\)
\Else
  \State\Return \(\{P\}\)
\EndIf
\end{algorithmic}
\end{algorithm}

\subsection{\texttt{NumRoots}}
\label{gg-sec-stat-numroots}

\(s(G)\) is the number of fixed points of \(G\). Correspondingly, \(s(R)\) is the number of roots of \(R\).

If \(H \leq G\) then \(H\) has at least as many fixed points as \(G\), so \(\preceq\) in this case is the usual \(\leq\) on integers.

\subsection{\texttt{Factors}}
\label{gg-sec-stat-factors}

This takes a parameter, which is another statistic \(s'\). Then \(s(G)\) is the multiset \(\{s'(G')\}\) where \(G'\) runs over the images of \(G\) acting on each of its orbits (so the degree of \(G'\) is the size of the corresponding orbit). Correspondingly, \(s(R)\) is the multiset \(\{s'(R')\}\) where \(R'\) runs over the irreducible factors of \(R\).

\subsection{\texttt{Degree}}
\label{gg-sec-stat-degree}

\(s(G)\) is the degree of the permutation group \(G\) and \(s(R)\) is the degree of \(R\).

If \(H \leq G\), then they are permutation groups of equal degree, so \(v_1 \preceq v_2\) is \(v_1 = v_2\).

\subsection{\texttt{FactorDegrees}}
\label{gg-sec-stat-facdegs}

\(s(G)\) is the multiset of sizes of orbits of \(G\). Correspondingly, \(s(R)\) is the mulitset of degrees of irreducible factors of \(R\).

This is equivalent to \code{Factors} with the \code{Degree} parameter, but is more efficient because it does not require the explicit computation of the orbit images of \(G\) on its orbits.

Additionally, it supports ordering as follows: we know that if \(H \leq G\) then the orbits of \(H\) form a refinement of the orbits of \(G\); that is, the orbits of \(G\) are unions of orbits of \(H\). Hence, given two multisets \(v_1\) and \(v_2\) of orbits sizes, we check combinatorially if one is a refinement of the other.

We provide an algorithm to compute maximal preimages of this statistic. First, in case the group \(G\) is intransitive, we embed \(G\) into a direct product \(D\) and find maximal preimages there. For each preimage \(H\), and \(d \in D\) we see if any \(H^d \cap G\) is a preimage. Observing that if \(n \in N_D(H)\) and \(g \in G\) then \(H^{ndg} \cap G = (H^d \cap G)^g\), it suffices to only consider coset representatives of \(N_D(H) \backslash D / G\).

\begin{algorithm}[Maximal preimages: \texttt{FactorDegrees}]
\label{gg-alg-maxpre-facdegs}
Given a group \(G\) of degree \(d\) and a multiset \(v\) of integers such that \(\sum v = d\), returns all maximal preimages of \(v\) in \(G\) up to conjugacy.
\begin{algorithmic}[1]
\State \(S \leftarrow \emptyset\)
\State Embed \(G \subset D = G_1 \times \ldots \times G_r\) 
\For{maximal preimages \(H\) of \(v\) in \(D\) (\cref{gg-alg-maxpre-facdegs-dp})}
  \For{double coset representatives \(d\) of \(N_D(H) \backslash D / G\)}
    \State \(H' \leftarrow H^d \cap G\)
    \If{\(H'\) has orbits of sizes \(v\)}
      \State \(S \leftarrow S \cup \{H'\}\)
    \EndIf
  \EndFor
\EndFor
\State \Return \(S\)
\end{algorithmic}
\end{algorithm}

To find maximal preimages in direct products, we first find all the ways in which \(v\) may be written as a union, with each component corresponding to a direct factor. Then by \cref{gg-lem-subpar-dp}, the maximal preimages in \(D\) are direct products of the maximal preimages in each (transitive) factor.

\begin{algorithm}[Maximal preimages: \texttt{FactorDegrees}: Direct products]
\label{gg-alg-maxpre-facdegs-dp}
Given a direct product \(G = G_1 \times \ldots \times G_r\) and \(v\) as above, returns all maximal preimages of \(v\) in \(G\) up to conjugacy.
\begin{algorithmic}[1]
\State \(S \leftarrow \emptyset\)
\For{multisets \((v_1,\ldots,v_r)\) of integers such that \(\sum v_i = \deg G_i\) and \(\bigcup_i v_i = v\)}
  \For{i = 1, \ldots, r}
    \State \(S_i \leftarrow\) maximal preimages of \(v_i\) in \(G_i\) (\cref{gg-alg-maxpre-facdegs-trans})
  \EndFor
  \For{\((H_1,\ldots,H_r) \in \prod_i S_i\)}
    \State \(S \leftarrow S \cup \{H_1 \times \ldots \times H_r\}\)
  \EndFor
\EndFor
\State \Return \(S\)
\end{algorithmic}
\end{algorithm}

To find maximal preimages in transitive groups, we embed \(G\) into a wreath product \(W\), and solve the problem there. As with \cref{gg-alg-maxpre-facdegs}, a loop over coset representatives lifts these to all preimages in \(G\).

\begin{algorithm}[Maximal preimages: \texttt{FactorDegrees}: Transitive]
\label{gg-alg-maxpre-facdegs-trans}
Given a transitive group \(G\) and \(v\) as above, returns all maximal preimages of \(v\) in \(G\) up to conjugacy.
\begin{algorithmic}[1]
\State \(S \leftarrow \emptyset\)
\State Embed \(G \subset W = G_r \wr \ldots \wr G_1\) 
\For{maximal preimages \(H\) of \(v\) in \(W\) (\cref{gg-alg-maxpre-facdegs-wr})}
  \For{double coset representatives \(w\) of \(N_W(H) \backslash W / G\)}
    \State \(H' \leftarrow H^w \cap G\)
    \If{\(H'\) has orbits of sizes \(v\)}
      \State \(S \leftarrow S \cup \{H'\}\)
    \EndIf
  \EndFor
\EndFor
\State \Return \(S\)
\end{algorithmic}
\end{algorithm}

\begin{remark}
Sometimes, if the wreath product \(W\) is very large compared to \(G\), the number of double cosets to check makes \cref{gg-alg-maxpre-facdegs-trans} infeasible. In this case, we use the naive algorithm instead.
\end{remark}

For wreath products, we work recursively so that we only need to consider a single wreath product \(A \wr B\). By \cref{gg-lem-subpar-wr}, the maximal preimages correspond to choosing a partition \(\cX\) for \(B\), and for each \(X \in \cX\) a partition \(\cY_X\) for \(A\), with \(v = \{\abs X \abs Y : Y \in \cY_X, X \in \cX\}\). We can think of \(v\) as the areas of a \(d \times e\) rectangle which has a series of vertical cuts (corresponding to the sizes of \(\cX\)), and each piece (\(X\)) having a further series of horizontal cuts (corresponding to the sizes of \(\cY_X\)). We call this a ``rectangle division'' (see \cref{gg-fig-recdiv}). For each such division, we find all possible corresponding partitions of \(A\) and \(B\), and take all combinations to construct the partitions for \(A \wr B\).

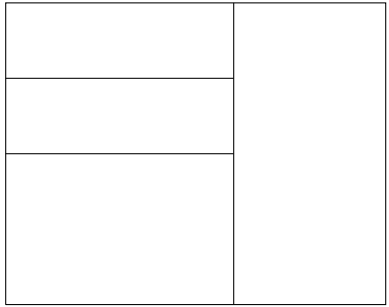
\begin{figure}
\centering
\begin{tikzpicture}
\draw (0,0) -- (5,0) -- (5,4) -- (0,4) -- (0,0);
\draw (3,0) -- (3,4);
\draw (0,2) -- (3,2);
\draw (0,3) -- (3,3);
\end{tikzpicture}
\caption[A rectangular division]{A rectangular division of a \(5 \times 4\) rectangle, represented as \(\{(3, \{2,1,1\}), (2, \{4\})\}\), with areas \(\{8,6,3,3\}\).}
\label{gg-fig-recdiv}
\end{figure}

\begin{algorithm}
\label{gg-alg-maxpre-facdegs-wr}
Given a wreath product \(G = W_r \wr \ldots \wr W_1\) and \(v\) as above, returns all maximal preimages of \(v\) in \(G\) up to conjugacy.
\begin{algorithmic}[1]
\If{r = 0}
  \State \Return \(\{G\}\)
\EndIf
\State \(A \leftarrow W_r \wr \ldots \wr W_2\)
\State \(B \leftarrow W_1\)
\State \(S \leftarrow \emptyset\)
\For{rectangle divisions \(\{(w_i,\{h_{i,j} \suchthat j\}) \suchthat i\}\) of \(\deg A \times \deg B\) into areas \(v\) 
}
  \State \(S_B \leftarrow\) maximal preimages of \(\{w_i \suchthat i\}\) in \(B\) (naive \cref{gg-alg-maxpre-naive})
  \For{i}
    \State \(S_{A,i} \leftarrow\) maximal preimages of \(\{h_{i,j} \suchthat j\}\) in \(A\) (recursively)
  \EndFor
  \For{\(H_B \in S_B\)}
    \State \(\cX \leftarrow \Orbits(H_B)\)
    \For{bijections \(m : \cX \to \{i\}\) so that \(\abs X = w_{m(X)}\)}
      \For{\((H_{A,1},\ldots) \in \prod_i S_{A,i}\)}
        \State \(H \leftarrow \parens{\prod_x H_{A,m(\cX(x))}} \sdp H_B\)
        \State \(S \leftarrow S \cup \{H\}\)
      \EndFor
    \EndFor
  \EndFor
\EndFor
\State \Return \(S\)
\end{algorithmic}
\end{algorithm}

We use the naive algorithm to find the maximal preimages of transitive and primitive groups. Since we are mainly dealing with groups close to \(p\)-groups, we expect that they have plenty of block structure and therefore the factors in any such wreath product are small enough to use the naive algorithm.

\subsection{\texttt{NumAuts}}
\label{gg-sec-stat-numauts}

\(s(G)\) is the index \((N_G(S):S)\) where \(S := \Stab_G(1)\), assuming \(G\) is transitive. \(s(R)\) is the number of automorphisms \(\abs{\Aut(L/K)}\) where \(R\) is irreducible and defines the extension \(L/K\).

Observe that if \(G=\Gal(R/K)\), then \(S=\Gal(R/L)\), \(N_G(S)\) is (by definition) the largest subgroup of \(G\) in which \(S\) is normal, and hence its fixed field is the smallest subfield \(M\) of \(L/K\) such that \(L/M\) is normal. Hence \(\Gal(L/M)\) is \(\Aut(L/K)\), and so \(\Aut(L/K) \isom N_G(S)/S\).

As we shall see in \cref{gg-lem-autgrp-order}, if \(H \leq G\) then \(s(G) \mid s(H)\). Hence \(v_1 \preceq v_2\) is \(v_2 \mid v_1\).

\subsection{\texttt{AutGroup}}
\label{gg-sec-stat-autgroup}

\(s(G)\) is the group \(N_G(S)/S\) where \(S := \Stab_G(1)\) as a regular permutation group of degree \((N_G(S):S)\); it requires \(G\) to be transitive. Correspondingly, \(s(R)\) requires \(R\) to be irreducible, and is \(\Aut(L/K)\) where \(L\) is the field defined by \(R\).

\(v_1 \sim v_2\) iff \(v_1\) and \(v_2\) are groups of the same degree and are conjugate in the symmetric group of this degree.

The test for ordering uses the following lemma, which says that as the Galois group gets smaller, the automorphism group gets larger. Hence \(v_1 \preceq v_2\) is defined as follows: \(v_1\) must have degree at least the degree of \(v_2\), and \(v_2\) must be conjugate to a subgroup of \(v_1\).

\begin{lemma}
\label{gg-lem-autgrp-order}
Suppose \(G' \leq G\) acts transitvely on a set \(X\). Fix \(x \in X\) and define \(S := \Stab_G(x)\), \(N := N_G(S)\), \(A := N/S\) and define \(S'\), \(N'\), \(A'\) similarly with respect to \(G'\). Then \(A\) is naturally isomorphic to a subgroup of \(A'\).
\end{lemma}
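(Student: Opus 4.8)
The plan is to interpret $A$ and $A'$ not as abstract quotients but as the automorphism groups of $X$ regarded as a $G$-set and a $G'$-set respectively; once this is done the required inclusion is essentially tautological. Recall the standard fact that for any group $\Gamma$ acting transitively on a set $Y$ with point stabilizer $\Sigma = \Stab_\Gamma(y)$, the quotient $N_\Gamma(\Sigma)/\Sigma$ is canonically isomorphic to $\Aut_\Gamma(Y)$, the group of $\Gamma$-equivariant bijections $Y \to Y$: using the $\Gamma$-set isomorphism $\Gamma/\Sigma \xrightarrow{\sim} Y$, $\gamma\Sigma \mapsto \gamma y$, one sends $n\Sigma$ to the map $\gamma y \mapsto \gamma n y$, which is well-defined and $\Gamma$-equivariant precisely because $n \in N_\Gamma(\Sigma)$, and every equivariant bijection arises this way. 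First I would apply this with $(\Gamma,Y,\Sigma) = (G,X,S)$ — legitimate since $G \supseteq G'$ acts transitively on $X$ — to get $A \cong \Aut_G(X)$, and with $(\Gamma,Y,\Sigma) = (G',X,S')$, using transitivity of $G'$, to get $A' \cong \Aut_{G'}(X)$; here $S' = \Stab_{G'}(x) = S \cap G'$.

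With both identifications made relative to the same basepoint $x$ (so that both are "evaluation at $x$" and hence compatible), the inclusion $\Aut_G(X) \subseteq \Aut_{G'}(X)$ inside $\operatorname{Sym}(X)$ is immediate: a bijection of $X$ commuting with every element of $G$ commutes a fortiori with every element of the subgroup $G'$. Composing $A \cong \Aut_G(X) \hookrightarrow \Aut_{G'}(X) \cong A'$ gives the claimed natural embedding.

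For readers who prefer an explicit map, I would also describe the embedding directly. Since $G'$ is transitive on $X$ we have $G = G'S$, so each $n \in N_G(S)$ can be written $n = g's$ with $g' \in G'$, $s \in S$. A short computation shows $g'$ normalizes $S$ (because $n$ does and $s$ does), and since it also normalizes $G'$ it normalizes $S \cap G' = S'$, so $g' \in N_{G'}(S')$; one then sends $nS \mapsto g'S'$. Checking this is well-defined (independent of the decomposition and of the coset representative $n$, both using $G' \cap S = S'$) is routine, and injectivity is clear since $g' \in S'$ forces $n \in S$. The only point needing a little care — and the one genuine obstacle — is multiplicativity: given $n_i = g_i' s_i$, one uses that the $G'$-part $g_2'$ again lies in $N_G(S)$, so conjugation by $g_2'$ preserves $S$ and lets one rearrange $n_1 n_2$ into the form $(g_1' g_2')\cdot(\text{element of }S)$. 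This amounts to verifying that the abstract identifications above are genuinely homomorphisms and agree; everything else is formal.
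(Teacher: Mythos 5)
Your proof is correct, but it takes a genuinely different route from the paper's. The paper argues by counting: it identifies \(N\) with the setwise stabilizer of \(\Fix(S)\), uses the orbit--stabilizer theorem twice to show \(\abs{A}=\abs{\Fix(S)}=(N\cap G':S')\), and then invokes the second isomorphism theorem to get \(A\isom (N\cap G')/(S\cap G')\leq N'/S'=A'\). Your main argument instead reinterprets both quotients as \(G\)-set automorphism groups, \(A\isom\Aut_G(X)\) and \(A'\isom\Aut_{G'}(X)\) via the standard dictionary for transitive actions, after which the inclusion \(\Aut_G(X)\subseteq\Aut_{G'}(X)\) inside \(\operatorname{Sym}(X)\) is tautological; this is cleaner, makes the naturality transparent, and avoids any counting (so it does not even secretly need finiteness, beyond the one place the standard dictionary uses it to upgrade \(\Sigma\subseteq n\Sigma n^{-1}\) to equality). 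The only bookkeeping caveat is that \(n\Sigma\mapsto(\gamma y\mapsto\gamma n y)\) is an anti-homomorphism, but since the two identifications are both anti-isomorphisms the composite \(A\to A'\) is a genuine embedding, so nothing is lost. Your explicit second construction ends up building the same map as the paper --- both amount to the decomposition \(N=(N\cap G')S\) followed by \(nS\mapsto g'S'\) --- but you obtain that decomposition more directly from \(G=G'S\) (transitivity of \(G'\)) plus the observation that the \(G'\)-part of an element of \(N\) automatically normalizes \(S\), whereas the paper extracts it from the transitive actions of \(N\) and \(N\cap G'\) on \(\Fix(S)\). What the paper's route buys in exchange is the explicit formula \(\abs{A}=\abs{\Fix(S)}\), which is of independent interest for the \texttt{NumAuts} statistic but is not needed for the lemma as stated.
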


\begin{proof}
By definition
\begin{align*}
N &= \{ n \in G : s \in S \implies s^n \in S \} \\
  &= \{ n \in G : s \in S \implies (s^n)(x) = x \} \\
  &= \{ n \in G : s \in S \implies s(n(x)) = n(x) \} \\
  &= \{ n \in G : s \in S \implies s \in \Stab_G(n(x)) \} \\
  &= \{ n \in G : S \subseteq \Stab_G(n(x)) \} \\
  &= \{ n \in G : S = \Stab_G(n(x)) \} \text{  by orbit-stabilizer theorem} \\
  &= \{ n \in G : n(x) \in \Fix(S) \} \\
  &= \{ n \in G : n(y) \in \Fix(S) \} \text{ for any \(y \in \Fix(S)\) by symmetry} \\
  &= \{ n \in G : y \in \Fix(S) \implies n(y) \in \Fix(S) \} \\
  &= \Stab_G \Fix(S)
\end{align*}
is the group of elements of \(G\) which permute the fixed points of \(S := \Stab_G(x)\).

Since \(G\) is transitive, for each \(y \in \Fix(S)\) there exists \(n \in G\) such that \(n(x) = y\), and hence \(n \in N\). We deduce that \(N\) acts transitively on \(\Fix(S)\), and in particular the orbit-stabilizer theorem implies that \[\abs A = (N:S) = \abs{\Fix(S)}.\]

Similarly, since \(G'\) is also transitive then \(N \cap G' = \Stab_{G'} \Fix(S)\) acts transitively on \(\Fix(S)\), and so the orbit-stabilizer theorem implies \[\abs{N \cap G'} = \abs{\Stab_{N \cap G'}(1)} \abs{\Fix(S)},\] but noting that the stabilizer is actually \(S'\) then we deduce \[(N \cap G' : S') = (N : S).\]

The isomorphism theorems imply \[(N \cap G')/(S \cap G') \isom (N \cap G')S/S \leq N/S,\] but noting that \(S' = S \cap G'\) then the previous paragraph implies that we have equality, and hence naturally \[(N \cap G')/(S \cap G') \isom N/S =: A.\]

Finally, note that \[N \cap G' = \Stab_{G'} \Fix(S) \leq \Stab_{G'} \Fix(S') =: N'\] so that \[(N \cap G')/(S \cap G') \leq N'/S' =: A'.\]
\end{proof}

\subsection{\texttt{Tup}}

This statistic takes as a parameter a tuple \((s_1,\ldots,s_k)\) of statistic algorithms. Then \(s(G) = (s_1(G),\ldots,s_k(G))\) and similarly for \(s(R)\). Also \(v_1 \sim v_2\) iff \(v_{1,i} \sim v_{2,i}\) for all \(i\), and similarly for \(\preceq\).

\section{Subgroup choice algorithms}
\label{gg-sec-choice}

A subgroup choice algorithm decides, given the current state of a group theory algorithm (\cref{gg-sec-groups}) for the resolvent method, which subgroup \(U \subgrp W\) to form a resolvent from next.

Currently we use one method \code{Tranche} which generates a sequence \(\sU_1,\sU_2,\ldots\) of sets of subgroups of \(W\) one at a time, which we call \define{tranches}. Given the current tranche, \(\sU\), we inspect each element \(U\) in turn to test if it is useful by some measure (see \cref{gg-rmk-useful}). If so, we use one such \(U\). If there is no such \(U\), we declare the tranche useless and move on to the next one.

The idea is that we avoid enumerating all possible subgroups \(U\subgrp W\), and only generate them until we find a useful one.

\begin{remark}[On usefulness]
\label{gg-rmk-useful}
In the \texttt{All} group theory algorithm, we have a pool \(\cP\) of all possible Galois groups, and therefore we know all of the possible outcomes of using the group \(U\) to form a resolvent: i.e. the resolvent has one of the Galois groups \(\braces{q(P) \,:\, P \in \cP}\) and so we measure the statistic values \(\cS = \braces{s(q(G)) \,:\, P \in \cP}\). If \(\cS\) contains multiple elements, then \(U\) is useful because we will certainly cut down the list \(\cP\). Usefulness for \texttt{Maximal} and \texttt{Maximal2} is defined in \cref{gg-sec-groups-maximal,gg-sec-groups-maximal2}.
\end{remark}

The rest of this section describes some possible methods for producing tranches.

\subsection{\texttt{All}}
\label{gg-sec-tranche-all}

Produces a single tranche containing all subgroups of \(W\).

\subsection{\texttt{Index}}
\label{gg-sec-tranche-idx}

For each divisor \(n \mid \abs{W}\), produces a tranche containing all the subgroups of \(W\) of index \(n\).

There are algorithms to produce the subgroups of a group with a given index. For example, the \texttt{Subgroups} intrinsic in Magma has a \texttt{IndexEqual} parameter for this purpose.

\subsection{\texttt{OrbitIndex}}
\label{gg-sec-tranche-oidx}

\begin{definition}
\label{gg-def-oidx}
For \(U \subgrp W \subgrp S_d\), the \define{orbit index of \(U\) in \(W\)} is the index \((W : U')\) where \[U' = \Stab_W \Orbits(U) = \braces{w \in W \,:\, X \in \Orbits(U), x \in X \implies w(x) \in X}\] and is denoted \((W:U)^\orb\). The \define{remaining orbit index of \(U\) in \(W\)} is \((W:U)/(W:U)^\orb = (U':U)\). If \(\cX\) is a partition of \(\braces{1,\ldots,d}\), then it is a \define{subgroup partition for \(W\)} if there exists \(U \subgrp W\) such that \(\cX = \Orbits(U)\). The \define{index} \((W:\cX)\) of a subgroup partition \(\cX\) is \((W : \Stab_W(\cX))\).
\end{definition}

For each divisor \(n \mid \abs{W}\) and \(r \mid n\), produces a tranche containing all the subgroups of \(W\) of index \(n\) and of remaining orbit index \(r\).

We find empirically that restricting to small \(r\), such as \(\val_p(r)\le1\), typically results in an algorithm which still terminates, and does so more quickly because it generates many fewer groups.

To produce the tranche corresponding to a given \((n,r)\), we compute the subgroup partitions \(\cX\) of \(\braces{1,\ldots,d}\) such that \((W:\Stab_W(\cX)) = m := \tfrac{n}{r}\), and then compute the subgroups of \(\Stab_W(\cX)\) of index \(r\). To efficiently compute the subgroup partitions of \(W\) of a given index, we use the special form of \(W\). If \(W\) is a wreath product, direct product, or symmetric group, then we can use the algorithms in the rest of this section to reduce the problem to computing subgroup partitions of smaller groups. For these smaller groups, we compute the subgroup partitions by explicitly enumerating all the subgroups.

\begin{lemma}[Partitions of direct products]
\label{gg-lem-subpar-dp}
Suppose \(W_i \subgrp S_{d_i}\) for \(i=1,\ldots,k\) (each symmetric group acting on a disjoint set) and \(W = W_1 \times \cdots \times W_k\). If \(\cX_i\) is a partition for \(W_i\) of orbit index \(m_i\) then \(\bigcup_i \cX_i\) is a partition for \(W\) of orbit index \(\prod_i m_i\). Every partition for \(W\) is of this form.
\end{lemma}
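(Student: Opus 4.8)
The plan is to prove the statement in three stages: first show that orbits of subgroups of $W = W_1 \times \cdots \times W_k$ decompose compatibly with the factor decomposition; then verify the orbit-index multiplicativity; finally establish the converse, that every subgroup partition for $W$ arises from subgroup partitions for the factors.

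\medskip

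First I would set up notation: write $\Omega_i = \{1,\ldots,d_i\}$ for the (disjoint) sets on which each $W_i$ acts, so $W$ acts on $\Omega = \bigsqcup_i \Omega_i$. A key preliminary observation is that each $\Omega_i$ is a $W$-invariant set, and moreover for any $U \subgrp W$, if we let $U_i$ denote the image of $U$ under the projection $W \to W_i$, then $U \subgrp U_1 \times \cdots \times U_k$ and the orbits of $U$ on $\Omega_i$ are \emph{refinements} of the orbits of $U_i$ on $\Omega_i$ --- but in general not equal. However, for the ``easy'' direction of the lemma we are \emph{given} partitions $\cX_i = \Orbits(V_i)$ for subgroups $V_i \subgrp W_i$; setting $U = V_1 \times \cdots \times V_k \subgrp W$, the orbits of $U$ on $\Omega$ are exactly $\bigcup_i \cX_i$ (an element of $W$ moving between $\Omega_i$'s is impossible, and within $\Omega_i$ the action of $U$ factors through $V_i$). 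Hence $\bigcup_i \cX_i$ is a subgroup partition for $W$.

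\medskip

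Next I would compute the orbit index. By \cref{gg-def-oidx}, the orbit index of $U$ in $W$ is $(W : \Stab_W \Orbits(U))$. Since $\Orbits(U) = \bigcup_i \cX_i$ and no element of $W$ mixes the blocks $\Omega_i$, a tuple $(w_1,\ldots,w_k) \in W$ stabilizes $\bigcup_i \cX_i$ if and only if each $w_i$ stabilizes $\cX_i$; that is, $\Stab_W(\bigcup_i \cX_i) = \prod_i \Stab_{W_i}(\cX_i)$. Therefore
\[
(W : \Stab_W \Orbits(U)) = \prod_i (W_i : \Stab_{W_i}(\cX_i)) = \prod_i m_i,
\]
which is the claimed multiplicativity.

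\medskip

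The main obstacle is the converse: showing \emph{every} subgroup partition $\cX$ for $W$ has the form $\bigcup_i \cX_i$ with each $\cX_i$ a subgroup partition for $W_i$. Suppose $\cX = \Orbits(U)$ for some $U \subgrp W$. Since each $\Omega_i$ is $W$-invariant, hence $U$-invariant, every block of $\cX$ lies entirely inside a single $\Omega_i$; so $\cX$ is the disjoint union of $\cX_i := \{X \in \cX : X \subseteq \Omega_i\}$, and $\cX_i$ is a partition of $\Omega_i$. It remains to show $\cX_i$ is realized as $\Orbits$ of some subgroup of $W_i$ --- and the natural candidate $U_i$ (the projection of $U$) does \emph{not} obviously work, since $\Orbits(U_i)$ could be coarser than $\cX_i$. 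The fix is to instead take $U$ itself: I claim $\Orbits(U|_{\Omega_i})$ as a permutation group on $\Omega_i$ equals $\cX_i$, and the restriction $U|_{\Omega_i}$ (the image of $U$ under restriction of the action to $\Omega_i$) is a subgroup of $W_i$, since $U \subgrp W = W_1\times\cdots\times W_k$ acts on $\Omega_i$ exactly through its $i$-th coordinate, which lands in $W_i$. The equality $\Orbits(U|_{\Omega_i}) = \cX_i$ holds because the $U$-orbit of a point $x \in \Omega_i$ stays in $\Omega_i$ and depends only on the $i$-th coordinates of elements of $U$, i.e. only on $U|_{\Omega_i}$. Thus $\cX_i$ is a subgroup partition for $W_i$, completing the proof. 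I would flag that the subtlety worth stating carefully is the distinction between the projection $U_i$ and the restricted action $U|_{\Omega_i}$; it is the latter, which may be a proper subgroup of $U_i$, that realizes $\cX_i$.
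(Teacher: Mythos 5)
Your proof is correct and follows essentially the same route as the paper's (which is much terser: it records \(\Stab_W(\bigcup_i \cX_i) = \prod_i \Stab_{W_i}(\cX_i)\) for the index, and for the converse simply takes the projections \(U_i\) of \(U\) and notes \(\Orbits(U) = \bigcup_i \Orbits(U_i)\)). The one point to correct: the distinction you flag between the projection \(U_i\) and the restricted action \(U|_{\Omega_i}\) is vacuous in this setting. Since each \(W_j\) with \(j \ne i\) acts trivially on \(\Omega_i\), the restriction homomorphism \(W \to \operatorname{Sym}(\Omega_i)\) \emph{is} the projection \(\pi_i\) (followed by the inclusion \(W_i \into \operatorname{Sym}(\Omega_i)\)), so \(U|_{\Omega_i} = \pi_i(U) = U_i\) exactly, and the orbits of \(U\) on \(\Omega_i\) coincide with those of \(U_i\) — they are never a proper refinement. (Proper refinement can occur for the orbits of \(U\) on \(\Omega_i \times \Omega_j\), i.e.\ in the product action, but that is not what is at issue here.) So your cautionary remark identifies a non-problem; the paper's direct use of the projections is already fully justified, and your argument, modulo that spurious worry, is the same argument.
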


\begin{proof}
By definition \(m_i = (W_i : \Stab_W(X_i))\). Now \[\Stab_W(\bigcup_i X_i) = \prod_i \Stab_{W_i}(X_i)\] and the result follows. Take any \(U \subgrp W\), and consider its projections \(U_i\) to \(W_i\), and let \(\cX_i = \Orbits(U_i)\), then clearly \(\cX = \bigcup_i \cX_i\).
\end{proof}

\begin{algorithm}[Partitions of direct products]
\label{gg-alg-subpar-dp}
Given \(W_i \subgrp S_{d_i}\) for \(i=1,\ldots,k\) and an integer \(m \mid \prod_i \abs{W_i}\), this returns all the partitions for \(W = W_1 \times \cdots \times W_k\) of index \(m\).
\begin{algorithmic}[1]
\If{\(k = 0\)}
  \State\Return \(\braces{\emptyset}\)
\EndIf
\State \(S \leftarrow \emptyset\)
\ForAll{\(m_1 \mid \gcd(m, \abs{W_1})\)}
  \State \(S_1 \leftarrow\) partitions of \(W_1\) of index \(m_1\)
  \State \(S_2 \leftarrow\) partitions of \(W_2 \times \cdots \times W_k\) of index \(m_2 = \tfrac{m}{m_1}\)
  \State \(S \leftarrow S \cup \braces{\cX_1 \cup \cX_2 \,:\, \cX_1 \in S_1, \cX_2 \in S_2}\)
\EndFor
\State\Return \(S\)
\end{algorithmic}
\end{algorithm}

\begin{lemma}[Partitions of wreath products]
\label{gg-lem-subpar-wr}
Suppose \(A,B\) are permutation groups, let \(\cX\) be a subgroup partition for \(B\), and for each \(X \in \cX\) let \(\cY_X\) be a subgroup partition for \(A\). Then \(\cZ = \braces{X \times Y \,:\, X \in \cX, Y \in \cY_X}\) is a subgroup partition for \(W = A \wr B\), its index is \((B:\cX) \prod_{X \in \cX} (A:\cY_X)^{\abs{X}}\), and all subgroup partitions are of this form up to conjugacy.
\end{lemma}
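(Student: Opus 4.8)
The plan is to prove the three assertions — that \(\cZ\) is a subgroup partition, that its index is as claimed, and that every subgroup partition of \(W\) has this form up to conjugacy — by working directly with the imprimitive action of \(W = A\wr B\) on \(\Omega\times\Delta\), where \(B\) acts on \(\Omega\), \(A\) acts on \(\Delta\), and the ``fibres'' \(\{\omega\}\times\Delta\) are blocks of imprimitivity permuted by \(W\) through the projection \(W\twoheadrightarrow B\). Throughout I use the paper's convention (\cref{gg-def-oidx}) that \(\Stab_W(\cZ)\) means the subgroup fixing \emph{each} part of \(\cZ\) setwise.

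\textbf{\(\cZ\) is a subgroup partition.} Write \(\cX = \Orbits(V)\) for some \(V\subgrp B\) and \(\cY_X = \Orbits(U_X)\) for some \(U_X\subgrp A\), and for \(\omega\in\Omega\) let \(X(\omega)\in\cX\) be the part containing \(\omega\). I would take \(U := \big(\prod_{\omega\in\Omega} U_{X(\omega)}\big)\sdp V\subgrp A^\Omega\sdp B = W\). This is a genuine subgroup: \(V\) permutes the coordinates of \(A^\Omega\) within the orbits \(X\), and the factor \(U_{X(\omega)}\) depends only on \(X(\omega)\), so the base part is \(V\)-invariant. A short orbit computation finishes it: the base part moves \(\delta\) throughout its \(U_{X(\omega)}\)-orbit in \(\Delta\), the subgroup \(V\) moves \(\omega\) throughout \(X(\omega)\) without changing \(\delta\), and re-applying the base part (legal since all factors in a block agree) keeps \(\delta\) in the same \(\cY_{X(\omega)}\)-part; hence \(\Orbits(U) = \{X\times Y : X\in\cX,\ Y\in\cY_X\} = \cZ\).

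\textbf{The index.} I would compute \(\Stab_W(\cZ)\) explicitly. An element \(((a_\omega)_\omega;b)\) fixes every part \(X\times Y\) setwise iff (looking at \(\Omega\)-coordinates) \(b\) fixes every \(X\in\cX\) setwise, i.e.\ \(b\in\Stab_B(\cX)\), and (looking at \(\Delta\)-coordinates, fibre by fibre) \(a_\omega\) fixes every \(Y\in\cY_{X(\omega)}\) setwise, i.e.\ \(a_\omega\in\Stab_A(\cY_{X(\omega)})\); crucially there is \emph{no} cross-condition linking the \(a_\omega\) for \(\omega\) in a common block. Hence \(\abs{\Stab_W(\cZ)} = \abs{\Stab_B(\cX)}\cdot\prod_{X\in\cX}\abs{\Stab_A(\cY_X)}^{\abs X}\), and dividing this into \(\abs W = \abs B\cdot\abs A^{\abs\Omega} = \abs B\cdot\prod_{X\in\cX}\abs A^{\abs X}\) gives exactly \((B:\cX)\prod_{X\in\cX}(A:\cY_X)^{\abs X}\).

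\textbf{Every subgroup partition has this form up to conjugacy.} Let \(\cZ = \Orbits(U)\) for arbitrary \(U\subgrp W\). Since the fibres are blocks for \(U\), projecting a \(U\)-orbit to \(\Omega\) yields a single orbit of \(V := \Image(U\to B)\); so each part of \(\cZ\) lies in \(X\times\Delta\) for a unique \(X\in\cX := \Orbits(V)\), a subgroup partition for \(B\). Fix \(X\), pick \(\omega_X\in X\), and let \(A_X\subgrp A\) be the image of \(\Stab_U(\omega_X)\) acting on the fibre \(\{\omega_X\}\times\Delta\); set \(\cY_X := \Orbits(A_X)\). For \(\omega\in X\) choose \(u_\omega\in U\) with \(u_\omega(\omega_X)=\omega\) (take \(u_{\omega_X}=1\)); it carries \(\{\omega_X\}\times\Delta\) to \(\{\omega\}\times\Delta\) by some \(\sigma_\omega\in A\), and since \(u_\omega\) conjugates \(\Stab_U(\omega_X)\) onto \(\Stab_U(\omega)\), the local action at \(\omega\) is \(\sigma_\omega A_X\sigma_\omega^{-1}\). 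Thus the \(U\)-orbit of \((\omega_X,\delta)\) meets the fibre over \(\omega\) in \(\sigma_\omega(A_X\delta)\). Conjugating \(U\) by the base-group element \(g = ((\sigma_\omega^{-1})_\omega;1)\in A^\Omega\subgrp W\) (well defined globally, as each \(\omega\) lies in one block) straightens all fibres at once, and one checks \(g(\cZ) = \Orbits(gUg^{-1}) = \{X\times Y : X\in\cX,\ Y\in\cY_X\}\), which is of the asserted form.

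\textbf{Main obstacle.} The first two parts are essentially bookkeeping once the imprimitive action and the paper's fixes-each-block convention for \(\Stab_W\) are pinned down. The crux is the last part, and in particular the straightening step: one must verify that the copies of \(\cY_X\) occurring in the various fibres over a single \(V\)-orbit are related by honest elements of \(A\) (via the \(\sigma_\omega\), coming from conjugacy of point stabilizers in \(U\)), not merely abstractly isomorphic, so that a single base-group conjugation suffices. Handling the normalizer/coset bookkeeping cleanly — and being careful that distinct blocks \(X\) involve disjoint coordinate sets so the conjugations combine — is where the care is needed.
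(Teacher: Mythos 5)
Your proof is correct and follows essentially the same route as the paper's: an explicit description of the imprimitive action to verify the partition property and compute \(\Stab_W(\cZ)\) for the index, and for the converse, projecting to \(B\), extracting the fibre-transporting base coordinates \(\sigma_\omega\) from elements of \(U\), and conjugating by the single base-group element they assemble into in order to straighten the fibres. The only cosmetic difference is that in the forward direction you build a witness subgroup from \(V\) and the \(U_X\) and compute its orbits, whereas the paper shows directly that \(\Stab_W(\cZ)\) is transitive on each part; these are interchangeable.
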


\begin{proof}
If \(A\) acts on \(\{1,\ldots,d\}\) and \(B\) acts on \(\{1,\ldots,e\}\), then elements of \(A \wr B\) can be defined as elements of the cartesian product \(A^e \times B\) acting on \(\{1,\ldots,e\} \times \{1,\ldots,d\}\) as \[(a_1,\ldots,a_e,b)(x,y) = (b x, a_x y).\] This implies the group operation is \[(a'_1,\ldots,a'_e,b')(a_1,\ldots,a_e,b) = (a'_{b1}a_1,\ldots,a'_{bd}a_d,b'b).\]

Suppose \(\cZ\) is defined as above, and take any \((x,y),(x',y') \in X \times Y \in \cZ\). Choose \(b \in \Stab_B(\cX)\) such that \(b(x)=x'\), which is possible since \(\Stab_B(\cX)\) acts transitively on \(X\) by definition of a subgroup partition. Choose \(a_x \in \Stab_A(\cY_X)\) such that \(a_x(y)=y'\), and choose all other \(a_{x''} \in \Stab_A(\cY_{X''})\) for \(x'' \in X''\) arbitrarily (e.g. the identity). Defining \(g=(a_1,\ldots,a_e,b)\) then \(g(x,y) = (bx,a_xy) = (x',y')\) and by construction \(g \in \Stab_W(\cZ)\). We conclude that \(\Stab_W(\cZ)\) acts transitively on each element of \(\cZ\), and so \(\cZ\) is a subgroup partition of \(W\) as claimed.

Expressing \(A \wr B\) as a semidirect product \(A^e \sdp B\), then \(\Stab_W(\cZ)\) is the subgroup \[\parens{\prod_{x \in \{1,\ldots,e\}} \Stab_A(\cY_{\cX(x)})} \sdp \Stab_B(\cX)\] where \(\cX(x)\) is the \(X \in \cX\) such that \(x \in X\). The index \((W:\cZ)\) follows.

Suppose \(G \subgrp W\). We want to show that a conjugate of \(G\) has orbits of the form \(\cZ\). Letting \(\pi : A \wr B \to B\) be the natural projection \((a_1,\ldots,a_e,b) \mapsto b\), let \(\cX = \Orbits(\pi(G))\), which is a subgroup partition of \(B\). For each \(X \in \cX\), fix a representative \(x_X \in X\), and for each \(x \in X\), fix some \(g_x = (a_{x,1},\ldots,a_{x,e},b_x) \in G\) such that \(\pi(g_x)(x_X) = x\). Define \(\hat a_x = a_{x,x_X}\) and \(\hat g = (\hat a_1,\ldots,\hat a_e,id) \in W\) then by construction \[g_x^{-1} \hat g (x,y) = (x_X, y).\] Define \(\cY_X\) such that \(\{x_X\} \times Y\) is an orbit of \(S_X := \Stab_G(\{x_X\} \times \{1,\dots,d\})\) for each \(Y \in \cY_X\). We claim that \[\Orbits(G^{\hat g}) = \cZ = \{X \times Y \suchthat Y \in \cY_X, X \in \cX\}.\] Note that if \(g^{\hat g}(x,y)=(x',y')\) then \(\pi(g^{\hat g})(x)=\pi(g)(x)=x'\) and so \(\cX(x)=\cX(x')=X\) say. For any \((x,y),(x',y')\) with \(x,x'\in X \in \cX\), then there exists \(g \in G\) such that \(g^{\hat g}(x,y) = (x',y')\) iff there is \(g\) such that \((g_{x'}^{-1} g g_x) g_x^{-1} \hat g (x, y) = g_{x'}^{-1} \hat g (x', y')\), i.e. such that \((g_{x'}^{-1} g g_x)(x_X, y) = (x_X, y').\) This occurs iff there is \(g \in S_X\) such that \(g(x_X,y)=(x_X,y')\), which occurs iff \(\cY(y)=\cY(y')=Y\) say, in which case \((x,y),(x',y')\in X \times Y\). This proves the claim.
\end{proof}

\begin{algorithm}[Partitions of wreath products]
\label{gg-alg-subpar-wr}
Given \(A \subgrp S_d, B \subgrp S_e\) and an integer \(m \mid \abs{A}^e \abs{B}\), this returns all the partitions for \(A \wr B\) of index \(m\) up to conjugacy.
\begin{algorithmic}[1]
\State \(S \leftarrow \emptyset\)
\ForAll{\(m' \mid m\)}
  \State \(S' \leftarrow\) partitions for \(B\) of index \(m'\)
  \ForAll{\(\cX \in S'\)}
    \ForAll{factorizations of \(\tfrac{m}{m'}\) of the form \(\prod_{X \in \cX} m_X^{\abs{X}}\)}
      \ForAll{\(X \in \cX\)}
        \State \(S_X \leftarrow\) partitions for \(A\) of index \(m_X\)
      \EndFor
      \ForAll{\((\cY_X)_X \in \prod_X S_X\)}
        \State include \(\braces{X \times Y \,:\, X \in \cX, Y \in \cY_X}\) in \(S\)
      \EndFor
    \EndFor
  \EndFor
\EndFor
\State \Return \(S\)
\end{algorithmic}
\end{algorithm}

\begin{remark}
The preceding algorithm may produce multiple representatives per conjugacy class. With a little more care, we can return just one as follows.

Having chosen \(\cX\), we partition it into \(B\)-conjugacy classes \(\cX_i=\{X_{i,j}\}\). Then we consider all factorizations of \(m/m'\) of the form \(\prod_{\cX_i} m_i^{\abs{X_{i,1}}}\), and then all factorizations of \(m_i\) of the form \(\prod_{X_{i,j}\in\cX_i} m_{X_{i,j}}\) with \(m_{i,1} \le m_{i,2} \le \ldots\). Hence we have a factorization of \(m/m'\) of the form \(\prod_{X \in \cX} m_{X}^{\abs{X}}\) as above. Note that this includes all factorizations of this form exactly once up to reordering conjugate blocks \(X \in \cX\).

For such a factorization, we partition \(\cX_i\) further into classes \(\cX_{i,j}=\{X_{i,j,k}\}\) such that \(m_{i,j}:=m_{X_{i,j,k}}\) is constant within a class. Similar to before, we let \(S_{i,j} = \{\cY_{i,j,\ell}\}\) be all partitions for \(A\) of index \(m_{i,j}\), and consider all \((\cY_{i,j,\ell_k})_{i,j,k} \in \prod_{i,j,k} S_{i,j}\) with \(\ell_1 \le \ell_2 \le \ldots\). Note that this includes all \((\cY_X)_X \in \prod_X S_X\) as above precisely once up to reordering conjugate blocks \(X \in \cX\).

Letting \(\cZ = \{X_{i,j} \times Y \suchthat Y \in \cY_{i,j,\ell_k}\}\) be the corresponding partition, then all such \(\cZ\) are not conjugate in \(A \wr B\), and they cover all conjugacy classes up to reordering conjugate blocks of \(\cX\). Define \(S \subgrp S_d \wr S_e\) to be the group isomorphic to \(1_d \wr \prod_i 1_{\abs{\cX_i}} \wr S_{\abs{X_{i,1}}}\) which reorders conjugate blocks of \(\cX\), where \(1_d\) denotes the trivial subgroup of \(S_d\). Then we find all \(\cZ\) up to \(A\wr B\) conjugacy by finding all \(S\)-conjugates of \(\cZ\) up to \(A\wr B\) conjugacy as follows.

Let \(H_0 = \Stab_{A \wr B}(\cZ)\), then we want all \(S\)-conjugates of \(H_0\) up to \(A\wr B\) conjugacy. Note that if \(n \in N_S(H_0)\) and \(g \in A \wr B\) then \(H_0^{nsg} \sim_{A\wr B} H_0^s\) so it suffices to consider double coset representatives \(s\) of \(N_S(H_0) \backslash S / (A \wr B) \cap S\). Compute \(H_0^s\) for all such \(s\) and dedupe by \(A \wr B\)-conjugacy.

\end{remark}

\begin{lemma}[Partitions of symmetric groups]
\label{gg-lem-subpar-sym}
Any partition \(\cX\) of \(\braces{1,\ldots,d}\) is a subgroup partition for \(S_d\) and it has orbit index \(d! / \prod_{X \in \cX} \abs{X}!\).
\end{lemma}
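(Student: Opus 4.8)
The plan is to exhibit an explicit witness subgroup. Given a partition \(\cX\) of \(\{1,\ldots,d\}\), set \(U := \prod_{X \in \cX} S_X \subgrp S_d\), the Young subgroup consisting of those permutations that fix each block \(X \in \cX\) setwise (equivalently, each factor \(S_X\) permutes the elements of \(X\) among themselves and fixes everything outside \(X\)). First I would verify that \(\Orbits(U) = \cX\): the group \(U\) maps each block to itself, so every orbit is contained in some block; conversely the factor \(S_X\) acts transitively on \(X\) (trivially when \(\abs{X}=1\)), so each block is a single orbit. Hence \(\cX\) is a subgroup partition for \(S_d\).

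It remains to compute the orbit index, which by \cref{gg-def-oidx} is \((S_d : \Stab_{S_d}(\cX))\) where \(\Stab_{S_d}(\cX) = \{w \in S_d : X \in \cX,\ x \in X \implies w(x) \in X\}\) is exactly the set of permutations fixing every block of \(\cX\) setwise --- that is, \(\Stab_{S_d}(\cX) = U\). Since the symmetric groups \(S_X\) on the pairwise disjoint blocks act independently, \(\abs{U} = \prod_{X \in \cX} \abs{X}!\), and therefore the orbit index is \(\abs{S_d}/\abs{U} = d!/\prod_{X \in \cX} \abs{X}!\).

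The proof is a direct unwinding of the definitions and presents no real difficulty. The only point to watch is that \cref{gg-def-oidx} uses the \emph{blockwise} stabilizer of \(\cX\) (which coincides with the Young subgroup \(U\)) rather than the full setwise stabilizer of the partition, which would additionally permute blocks of equal size and hence give a different index; keeping these apart is the one place where a careless argument could go wrong. Degenerate singleton blocks, where \(S_X\) is trivial, are handled uniformly since the transitivity requirement on \(X\) is then vacuous.
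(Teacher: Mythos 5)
Your proof is correct and follows the same route as the paper, which simply observes that \(\Stab_{S_d}(\cX) = \prod_{X \in \cX} S_X\); you additionally spell out the verification that the orbits of this Young subgroup are exactly the blocks of \(\cX\) and correctly note the blockwise-versus-setwise stabilizer distinction in \cref{gg-def-oidx}. Nothing further is needed.
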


\begin{proof}
Indeed \(\Stab_{S_d}(\cX) = \prod_{X \in \cX} S_X\).
\end{proof}

\begin{algorithm}[Partitions of symmetric groups]
\label{gg-alg-subpar-sym}
Given integers \(d \geq 0, m \mid d!\), returns all partitions for \(S_d\) of index \(m\) up to conjugacy.
\begin{algorithmic}[1]
\If{\(d=0\)}
  \State\Return \(\braces{\emptyset}\)
\EndIf
\State \(S \leftarrow \emptyset\)
\ForAll{\(d_1=0,\ldots,d\)}
  \If{\(d!/d_1!(d-d_1)! \mid m\)}
    \State \(S_2 \leftarrow\) partitions of \(S_{d-d_1}\) of index \(m d_1! (d-d_1)! / d!\) up to conjugacy
    \State \(S \leftarrow S \cup \braces{\braces{1,\ldots,d_1} \cup \cX_2 \,:\, \cX_2 \in S_2}\)
  \EndIf
\EndFor
\State\Return \(S\)
\end{algorithmic}
\end{algorithm}

\section{Implementation and results}
\label{gg-sec-implementation}

These algorithms have been implemented \cite{galoiscode} for the Magma computer algebra system \cite{magma}. Our main \code{GaloisGroup} routine takes two arguments: a polynomial over a \(p\)-adic field, and a string describing the parameterization of the algorithm to use.

Our algorithm is by design highly modular, with each piece of the parameterization as independent as possible from the rest. This means that if one has a new algorithm for evaluating resolvents for instance, one simply needs to implement this algorithm satisfying a particular interface, and then add a line of code to the parameterization parser.

The main omission from our implementation is that the \code{SinglyWild} global model algorithm is not available in full generality, which means that for wild extensions our global model will usually use symmetric groups. Over \(\QQ_2\) with a \(2 \times \ldots \times 2\) ramification filtration this is not a problem, but for coarser filtrations, \(S_8\) is much larger than \(C_2^3\) for example, and \(S_7\) is much larger than \(C_7\), and so our global models are far from optimal. A special case of \code{SinglyWild} has been implemented and is discussed specifically in \cref{gg-sec-impl-sw}.

All experiments reported on in this section were performed on a 2.7GHz Intel Xeon. Any timings are given in core-seconds. Tables of Galois groups have been produced from all runs in this section and are available from the implementation website \cite{galoiscode}.

Unless otherwise stated, all experiments use the ``exact'' \(p\)-adic polynomial type made available by the \texttt{ExactpAdics} package \cite{exactpadics}. This uses infinite-precision arithmetic and its routines are designed to give provably correct results (modulo coding errors) and hence our algorithm also yields provably correct results except for \cref{gg-rmk-resolvent-complex-precision}.

See \cite[Ch. II, \S13]{DPhD} for a more detailed account.

\subsection{Some particular parameterizations}

Six parameterizations we will consider are named A0, B0, A1, B1, A2 and B2. These parameterizations all try three algorithms in turn: \code{Tame} (\cref{gg-sec-tame}), \code{Singly\-Ra\-mi\-fied} (\cref{gg-sec-singlyramified}) and \code{ResolventMethod} (\cref{gg-sec-arm}). The resolvent method evaluates resolvents using a global model which first factorizes the polynomial, then finds the ramification tower of the field defined by each factor, then finds a global model for each segment of the tower.

For the A parameterizations, this global model is \code{Symmetric}. For the B parameterizations, we use the \code{RootOfUnity}, \code{RootOfUniformizer} or \code{Symmetric} global model, depending on whether the segment is unramified, tame or wild.

The number part of the parameterization name controls the group theory part of the algorithm. For A0 and B0, we enumerate \code{All} possible Galois groups, then eliminate candidates based on the \code{FactorDegrees} statistic for resolvents of all subgroups. For A1 and B1, we do the same except using the \code{OrbitIndex} method to only generate resolvents for subgroups whose remaining orbit index \(r\) satisfies \(v_p(r) \le 1\). For A2 and B2, instead of enumerating all possible Galois groups, we work down the graph of possibilities using \code{Maximal2}.

We shall also consider the parameterization 00, which is the same as A0, but which uses a \code{Symmetric} global model for each factor and the \code{RootsMaximal} group theory algorithm \cite[Ch. II, \S5.4]{DPhD} which mimics Stauduhar's original absolute resolvent method \cite{Stauduhar73}.

\subsection{Up to degree 12 over \texorpdfstring{\(\QQ_2\)}{Q2}, \texorpdfstring{\(\QQ_3\)}{Q3} and \texorpdfstring{\(\QQ_5\)}{Q5}}
\label{gg-sec-d12}

The local fields database (LFDB) \cite{LFDB} tabulates data about all extensions of degree up to 12 over \(\QQ_p\) for all \(p\) including a defining polynomial, residue and ramification degrees, Galois and inertia groups, and the Galois slope content which summarizes the ramification polygon of the Galois closure.

We have run our algorithm with the eight paramaterizations \texttt{Naive}, 00 and A0 to B2 on all defining polynomials from the LFDB of degrees 2 to 12 over \(\QQ_2\), \(\QQ_3\) and \(\QQ_5\). We also ran with the parameterization A0 but using Magma's default inexact polynomial representation, which does not guarantee correctness, which we denote A0*. In all cases, the Galois group agrees with that reported in the LFDB.

The mean run times of these are given in Tables~\ref{gg-tbl-d12-q2}, \ref{gg-tbl-d12-q3} and \ref{gg-tbl-d12-q5}. In each case, the times within 10\% of the smallest are shown in bold. Counts marked with an asterisk (*) represent a random sample of all possibilities. Times marked with a numeric superscript mean that the algorithm failed to find the Galois group for this many polynomials; these are not included in the mean. A dash (---) means the corresponding algorithm was not tried. A cross (\texttimes) means the corresponding runs were prohibitively slow. Times preceded by \(\approx\) are the mean of a small number of runs, the rest being prohibitively slow. This notation is reused in subsequent tables.

\begin{table}
    \centering
        \begingroup
    \newcommand{\postbox}{\makebox[0pt][l]}
    \newcommand{\prebox}{\makebox[0pt][r]}
    \newcommand{\xx}{\postbox{*}}
    \newcommand{\z}[1]{\postbox{\textsuperscript{#1}}}
    \begin{tabular}{rrrrrrrrrrr}
    \hline
    Degree    & \# & \multicolumn{9}{l}{Run time (seconds)} \\
              &      & Naive & 00 & A0*  & A0   & B0   & A1   & B1   & A2   & B2   \\
    \hline
    2            & 7     & \bfseries 0.03 & 0.07 & 0.04 & 0.07 & 0.07 & 0.07 & 0.07 & 0.06 & 0.07 \\
    3            & 2     & \bfseries 0.08 & 0.15 & 0.10 & 0.14 & 0.15 & 0.16 & 0.16 & 0.15 & 0.15 \\ 
    4            & 59    & \bfseries 0.05 & 0.16 & 0.09 & 0.19 & 0.23 & 0.19 & 0.23 & 0.19 & 0.23 \\
    \(2+2=4\)    & 28    & ---  & ---  & ---  & \bfseries 0.20 & 0.23 & \bfseries 0.19 & 0.22 & \bfseries 0.19 & 0.23 \\
    5            & 2     & \bfseries 0.08 & 0.15 & 0.10 & 0.15 & 0.15 & 0.15 & 0.16 & 0.15 & 0.15 \\
    6            & 47    & 1.32 & 0.28 & \bfseries 0.13 & 0.24 & 0.28 & 0.25 & 0.27 & 0.26 & 0.28 \\
    \(4+2=6\)    & 413   & ---  & ---  & ---  & \bfseries 0.34 & 0.39 & \bfseries 0.34 & 0.40 & \bfseries 0.35 & 0.42 \\
    \(3+3=6\)    & 3     & ---  & ---  & ---  & \bfseries 0.12 & \bfseries 0.13 & \bfseries 0.13 & \bfseries 0.12 & \bfseries 0.12 & \bfseries 0.12 \\
    7            & 2     & \bfseries 0.09 & 0.18 & 0.12 & 0.15 & 0.15 & 0.15 & 0.16 & 0.15 & 0.15 \\
    8            & 1823  & \(\approx 100\) & \(\approx 50\) & \bfseries 0.45 & 0.59 & 0.65 & 0.59 & 0.65 & 0.58 & 0.69 \\
    \(6+2=8\)    & 329   & ---  & ---  & ---  & \bfseries 0.43 & \bfseries 0.47 & \bfseries 0.43 & \bfseries 0.47 & \bfseries 0.44 & 0.49 \\
    \(4+4=8\)    & 1770  & ---  & ---  & ---  & \bfseries 0.57 & 0.65 & \bfseries 0.58 & 0.68 & \bfseries 0.56 & 0.80 \\
    9            & 3     & 0.15 & 0.15 & \bfseries 0.08 & 0.12 & 0.12 & 0.12 & 0.13 & 0.12 & 0.12 \\
    10           & 158   & \(\approx 90\) & \texttimes & \bfseries 0.32 & 0.43 & 0.47 & 0.44 & 0.48 & 0.49 & 0.48 \\
    11           & 2     & 0.46 & 0.17 & \bfseries 0.10 & 0.15 & 0.16 & 0.15 & 0.15 & 0.18 & 0.17 \\
    12           & 5493  & \texttimes & \texttimes & ---  & 1.26 & 1.31 & \bfseries 1.18 & \bfseries 1.21 & \bfseries 1.11 & 1.23 \\
    \(8+4=12\)   & 1000\xx& ---  & --- & ---  & 10.97 & 11.19 & 10.31 & 10.25\z1 & \bfseries 1.33 & 1.63 \\
    \(6+6=12\)   & 1128  & ---  & ---  & ---  & 2.56 & 1.74 & 2.54 & 1.70 & \bfseries 0.99 & \bfseries 0.96 \\
    14\postbox u & 78    & \texttimes & \texttimes & ---  & 1.45 & 3.97 & \bfseries 0.96 & 5.89 & 4.05 & 4.62 \\
    14\postbox t & 510   & \texttimes & \texttimes & ---  & 3.05 & \bfseries 1.19 & 1.73 & \bfseries 1.19 & 1.98 & \bfseries 1.14 \\
    16\postbox a & 64\xx  & \texttimes & \texttimes & ---  & 53.65 & 54.54 & 17.47\z4 & 18.21\z4 & {\bfseries 7.25}\z4 & {\bfseries 7.59}\z4 \\
    16\postbox b & 253\xx & \texttimes & \texttimes & ---  & 304.97 & 288.25 & 42.37\z7 & 34.90\z7 & {\bfseries 25.47}\z7 & 29.40\z7 \\
    16\postbox c & 130\xx & \texttimes & \texttimes & ---  & \texttimes & \texttimes & 133.29\z{23} & 195.59\z{23} & {\bfseries 115.38}\z4 & 150.83\z{23} \\
    18 & 2046 & --- & --- & --- & \(\approx100\) & \bfseries 1.80 & \(\approx75\) & \bfseries 1.73 & \(\approx35\) & \bfseries 1.70 \\
    20 & 511318 & --- & --- &  \multicolumn{7}{r}{(used several parameterizations; see \cref{gg-sec-d20}) \bfseries 10.70} \\
    22 & 8190 & --- & --- & --- & --- & \bfseries 2.90 & --- & \bfseries 2.77 & --- & \bfseries 2.90 \\
    \hline
    \end{tabular}
    \endgroup
    \caption[Timings on polynomials up to degree 22 over \(\QQ_2\)]{Mean run times for some parameterizations on polynomials defining fields of given degrees over \(\QQ_2\).}
    \label{gg-tbl-d12-q2}
\end{table}

\begin{table}
    \centering
    \begin{tabular}{rrrrrrrrrrr}
    \hline
    Deg & \# & \multicolumn{9}{l}{Run time (seconds)} \\
    & & Naive & 00 & A0* & A0 & B0 & A1 & B1 & A2 & B2 \\
    \hline
    2  &   3 & \bfseries 0.04 & 0.11 & 0.07 & 0.10 & 0.11 & 0.12 & 0.12 & 0.11 & 0.11 \\
    3  &  10 & 0.05 & 0.07 & \bfseries 0.04 & 0.06 & 0.06 & 0.06 & 0.06 & 0.05 & 0.06 \\
    4  &   5 & 0.10 & 0.10 & \bfseries 0.05 & 0.08 & 0.08 & 0.08 & 0.12 & 0.09 & 0.09 \\
    5  &   2 & \bfseries 0.08 & 0.16 & 0.10 & 0.15 & 0.16 & 0.15 & 0.16 & 0.14 & 0.16 \\
    6  &  75 & 0.66 & 0.29 & \bfseries 0.13 & 0.31 & 0.33 & 0.34 & 0.32 & 0.30 & 0.32 \\
    7  &   2 & 0.12 & 0.17 & \bfseries 0.10 & 0.15 & 0.18 & 0.19 & 0.15 & 0.16 & 0.17 \\
    8  &   8 & 0.10 & 0.09 & \bfseries 0.06 & 0.09 & 0.08 & 0.09 & 0.08 & 0.08 & 0.08 \\
    9  & 795 & \(\approx 400\) & \(\approx 100\) &  --- & \bfseries 0.63 & \bfseries 0.64 & \bfseries 0.67 & \bfseries 0.66 & \bfseries 0.66 & 0.73 \\
    10 &   6 & 0.14 & 0.09 & \bfseries 0.08 & 0.09 & 0.09 & 0.09 & 0.10 & 0.09 & 0.10 \\
    11 &   2 & 0.15 & 0.16 & \bfseries 0.11 & 0.17 & 0.17 & 0.18 & 0.19 & 0.21 & 0.20 \\
    12 & 785 & \texttimes & \texttimes &  --- & \bfseries 1.52 & \bfseries 1.57 & 1.90 & 2.24 & 2.21 & 2.54 \\
    \hline
    \end{tabular}
    \caption[Timings on polynomials up to degree 12 over \(\QQ_3\)]{Mean run times for some parameterizations on polynomials defining fields of given degrees over \(\QQ_3\). There were 11 polynomials of degree 12 for which A0, A1 and A2 did not succeed due to a bug in Magma; these are not included in timings.}
    \label{gg-tbl-d12-q3}
\end{table}

\begin{table}
    \centering
    \begin{tabular}{rrrrrrrrrrr}
    \hline
    Deg & \# & \multicolumn{9}{l}{Run time (seconds)} \\
    & & Naive & 00 & A0* & A0 & B0 & A1 & B1 & A2 & B2 \\
    \hline
    2  & 3   & \bfseries 0.04 & 0.11 & 0.07 & 0.12 & 0.28 & 0.11 & 0.11 & 0.12 & 0.11 \\
    3  & 2   & \bfseries 0.09 & 0.14 & \bfseries 0.10 & 0.15 & 0.15 & 0.15 & 0.15 & 0.20 & 0.16 \\
    4  & 7   & \bfseries 0.03 & 0.07 & 0.04 & 0.07 & 0.07 & 0.08 & 0.07 & 0.09 & 0.08 \\
    5  & 26  & 0.12 & 0.05 & \bfseries 0.02 & 0.05 & 0.06 & 0.05 & 0.06 & 0.05 & 0.06 \\
    6  & 7   & 0.07 & 0.09 & \bfseries 0.05 & 0.08 & 0.08 & 0.08 & 0.09 & 0.08 & 0.08 \\
    7  & 2   & 0.12 & 0.17 & \bfseries 0.10 & 0.15 & 0.16 & 0.16 & 0.16 & 0.15 & 0.21 \\
    8  & 11  & 0.07 & 0.09 & \bfseries 0.05 & 0.08 & 0.07 & 0.08 & 0.08 & 0.07 & 0.09 \\
    9  & 3   & 0.12 & 0.11 & \bfseries 0.09 & 0.15 & 0.13 & 0.13 & 0.13 & 0.13 & 0.13 \\
    10 & 258 & \(\approx 100\) & \texttimes & ---  & \bfseries 2.09 & \bfseries 1.93 & 3.00 & 2.76 & 16.02 & 11.87 \\
    11 & 2   & 0.15 & 0.17 & \bfseries 0.11 & 0.18 & 0.17 & 0.17 & 0.19 & 0.18 & 0.44 \\
    12 & 17  & 0.16 & \bfseries 0.08 & \bfseries 0.09 & \bfseries 0.08 & \bfseries 0.08 & \bfseries 0.08 & \bfseries 0.08 & \bfseries 0.08 & \bfseries 0.08 \\
    \hline
    \end{tabular}
    \caption[Timings on polynomials up to degree 12 over \(\QQ_5\)]{Mean run times for some parameterizations on polynomials defining fields of given degrees over \(\QQ_5\).}
    \label{gg-tbl-d12-q5}
\end{table}

Over \(\QQ_2\), we have also run the algorithm on a selection of reducible polynomials whose irreducible factors have a given set of degrees. For example, we consider all pairs \(F_1,F_2 \in K[x]\) of quadratic polynomials defining quadratic fields over \(\QQ_2\) and run the algorithm on \(F(x) = F_1(x) F_2(x+1)\). Note that the offset \(x+1\) ensures that \(F(x)\) is squarefree in case \(F_1=F_2\). Mean run times are given in \cref{gg-tbl-d12-q2}, where for example degree ``\(2+2=4\)'' means products of quadratics. 

Observe that A0* is generally faster than A0, suggesting there is some overhead due to using exact arithmetic. However, this overhead is around a factor of two in the worst case and usually less, so not too significant.

There is little variation in timings between the six parameterizations A0 to B2. This suggests that for small degrees, there is little overhead in writing down all possible Galois groups \(G \subgrp W\), or in enumerating all subgroups of \(W\) of a given index.

Unsurprisingly, the run time increases in both the degree \(d\) and in \(v_p(d)\), the latter being the number of wild ramification breaks possible.

Not displayed in the table is that the variance in these run times is low. In particular, the maximum run time is always within a factor of 3 of the mean, and is usually less.

For small degrees, the simple parameterization 00 is comparable to the other parameterizations. However it quickly becomes infeasible as the degree increases, taking for example about 50 seconds at degree 8 over \(\QQ_2\).

The same is true for the \texttt{Naive} algorithm. Indeed, for small degrees this is often the fastest but becomes infeasibly slow above degree about 10.

\subsection{Degree 14 over \texorpdfstring{\(\QQ_2\)}{Q2}}
\label{gg-sec-d14}

There are two types of wildly ramified extensions \(L/K=\QQ_2\) of degree 14: those with \(e(L/K)=2\) and those with \(e(L/K)=14\). In the former case, \(L\) is a ramified quadratic extension of the unique unramified extension \(U/K\) of degree 7. In the latter case, \(L\) is a ramified quadratic extension of the unique (tamely) ramified extension \(T=K(\sqrt[7]2)/K\) of degree 7. We refer to these as Type 14u and Type 14t respectively.

Using the \code{AllExtensions} intrinsic in Magma we have generated all such extensions up to \(K\)-conjugacy, and have run our algorithm on all of these. The timings are given in \cref{gg-tbl-d12-q2} separately for the two types.

As a point of comparison, \cite{AwtreyD14} uses a degree 364 resolvent relative to \(W=S_{14}\) and a few other invariants to compute the same Galois groups, taking around 20 hours per polynomial whereas our algorithm takes around 2 seconds. Our results are consistent with \cite[Table 3]{AwtreyD14}.

We see that for Type 14t, using a more sophisticated global model \code{Root\-Of\-Uni\-formizer} for \(T/K\) in the B parameterizations instead of \texttt{Symmetric} in the A parameterizations makes a marked improvement to the run-time. Even when we do use \texttt{Symmetric}, we get an improvement for using more sophisticated group theory, comparing A0, A1 and A2.

In contrast, for Type 14u using a more sophisticated global model \texttt{RootOfUnity} actually made the run time worse. In this case, with parameterization B0, most of the run time is spent computing complex approximations to resolvents, despite generally using fewer resolvents and using a lower complex precision. This suggests that the implementation of \texttt{RootOfUnity} needs to be optimized.

\subsection{Degree 16 over \texorpdfstring{\(\QQ_2\)}{Q2}}
\label{gg-sec-d16}

Recall (e.g. \cite{PS} or \cite{extensions}) that to an extension of \(p\)-adic fields, we can attach a ramification polygon, which is an invariant of the extension. By attaching further residual information such as the residual polynomials of each face of the ramification polygon, we can form a finer invariant.

Using the \texttt{pAdicExtensions} package \cite{extensionscode}, which implements these invariants, we generated all possible equivalence classes of the finest such invariant, called the \define{fine ramification polygon with residues and uniformizer residue} in \cite{extensions}, for totally ramified extensions of degree 16 of \(\QQ_2\).

For each class, we selected at random one Eisenstein polynomial generating a field with this invariant, giving us a sample of 447 polynomials.

We divide these polynomials into three types. Writing \(L=L_t/\ldots/L_0=K=\QQ_2\) for the ramification filtration of the field they generate, then Type 16a polynomials have \((L_i:L_{i-1})=2\) for all \(i\) (and hence \(t=4\)), Type 16b polynomials are those remaining with \((L_i:L_{i-1})\mid4\) for all \(i\), and Type 16c are the rest (so \((L_i:L_{i-1})=8\) or \(16\) for some \(i\)). There are 64, 253 and 130 polynomials of each type respectively.

In total, there are 4,008,960 degree 16 extensions of \(\QQ_2\) inside \(\bar \QQ_2\) of Type 16a, 1,857,120 of Type 16b and 155,024 of Type 16c \cite{Sinclair}.

Per an earlier remark, we do not have \texttt{SinglyWild} global models fully implemented and so use the less efficient \texttt{Symmetric} instead. We expect run times for Types 16b and 16c to be worse than Type 16a, since the former will work relative to groups like \(W=S_4 \wr S_4\) or \(S_2 \wr S_8\) which are larger than \(W=S_2 \wr S_2 \wr S_2 \wr S_2\) of the latter. We expect that with \texttt{SinglyWild} fully implemented, the overgroup for Types 16b or 16c will be smaller not larger than for Type 16a, and that Types 16b and 16c will therefore actually become the easier classes. See \cref{gg-sec-impl-sw} for some evidence supporting this claim.

Our algorithm has been run on these polynomials with the 6 parameterizations A0 to B2. \Cref{gg-tbl-d16-q2} summarizes the results, with the polynomials grouped by type. Mean timings are also given in \cref{gg-tbl-d12-q2} for comparison. Some of these runs failed to find the Galois group, because the parameterization ran out of resolvents to try; the number of failures is given in the table. The timings only include successful runs. To give an idea of the variance in run time, we report the median and maximum time as well as the mean.

\begin{table}
    \centering
    \begin{adjustbox}{max width=\textwidth}
    \begin{tabular}{lrrrrrr}
        \hline
        & A0 & B0 & A1 & B1 & A2 & B2 \\
        \hline
        \multicolumn{7}{l}{Type 16a (64 polynomials)} \\
        Number failed    & 0      & 0      & 4     & 4     & 4     & 4     \\
        Mean run time    & 53.65  & 54.54  & 17.47 & 18.21 & 7.25  & 7.59  \\
        Median run time  & 27.87  & 28.64  & 16.69 & 17.00 & 6.06  & 6.34  \\
        Maximum run time & 311.86 & 252.39 & 31.57 & 56.59 & 22.99 & 21.76 \\
        \hline
        \multicolumn{7}{l}{Type 16b (253 polynomials)} \\
        Number failed    & 0       & 0       & 7      & 7       & 7       & 7       \\
        Mean run time    & 304.97  & 288.25  & 42.37  & 34.90   & 25.47   & 29.40   \\
        Median run time  & 18.20   & 14.77   & 12.25  & 10.38   & 8.02    & 7.65    \\
        Maximum run time & 4016.19 & 3721.84 & 432.85 & 1182.44 & 1063.16 & 1616.56 \\
        \hline
        \multicolumn{7}{l}{Type 16c (130 polynomials)} \\
        Number failed    & --- & --- & 23      & 23      & 4        & 23      \\
        Mean run time    & --- & --- & 133.29  & 195.59  & 115.38   & 150.83  \\
        Median run time  & --- & --- & 10.50   & 1.58    & 1.43     & 1.36    \\
        Maximum run time & --- & --- & 2502.06 & 7949.19 & 12432.12 & 4368.25 \\
        \hline
    \end{tabular}
    \end{adjustbox}
    \caption[Timings on polynomials of degree 16 over \(\QQ_2\)]{Run times in seconds for a selection of parameterizations on a sample of polynomials defining fields of degree 16 over \(\QQ_2\) divided into three types.}
    \label{gg-tbl-d16-q2}
\end{table}

The run times are significantly higher at degree 16 than lower degrees, and there are now pronounced differences between the parameterizations, with A0 and B0 being the slowest and numbered A2 and B2 being the fastest.

As predicted, Type 16a polynomials are the fastest. For this type, the median is usually close to the mean and the maximum is not much larger, indicating this is a low-variance regime. Elsewhere, the median is smaller and the maximum is a lot higher, so the variance is greater.

\subsection{Degree 18 over \texorpdfstring{\(\QQ_2\)}{Q2}}
\label{gg-sec-d18}

Using the \texttt{pAdicExtensions} package \cite{extensionscode}, we have generated all ramification polygons of totally ramified extensions \(L/\QQ_2\) of degree 18. These have vertices of the form
\[(1,J), (2,0), (18,0)\]
where the discriminant valuation is \(18+J-1\). Note that these extensions are of the form \(L/T/\QQ_2\) where \(T/\QQ_2\) is the unique tame extension of degree 9 and \(L/T\) is quadratic.

For each polygon, we have generated a set of polynomials generating all extensions with this ramification polygon, and run our algorithm on them all with parameterizations A0 to B2. There are 2046 polynomials in total.

Mean timings are given in \cref{gg-tbl-d12-q2}. Note that the B parameterizations are far quicker than A as a result of using the \code{RootOfUniformizer} global model instead of \code{Symmetric} for \(T/\QQ_2\).

In \cref{gg-tbl-d18-q2} we give the number of polynomials for each ramification polygon (parameterized by \(J\)) and the count of the T-numbers of their Galois groups.

\begin{table}
    \centering
    \begin{tabular}{rrp{17em}}
    \hline
    \(J\) & \# & Groups \\
    \hline
    1  & 2 & \(433\), \(434\) \\
    3  & 4 & \(98\), \(101\), \(588\), \(592\) \\
    5  & 8 & \(433^2\), \(434^2\), \(588^2\), \(592^2\) \\
    7  & 16 & \(433^4\), \(434^4\), \(588^4\), \(592^4\) \\
    9  & 32 & \(45^2\), \(147^2\), \(512^{14}\), \(656^{14}\) \\
    11 & 64 & \(433^8\), \(434^8\), \(512^{16}\), \(588^8\), \(592^8\), \(656^{16}\) \\
    13 & 128 & \(433^{16}\), \(434^{16}\), \(512^{32}\), \(588^{16}\), \(592^{16}\), \(656^{32}\) \\
    15 & 256 & \(98^2\), \(101^2\), \(147^4\), \(588^{62}\), \(592^{62}\), \(656^{124}\) \\
    17 & 512 & \(433^{32}\), \(434^{32}\), \(512^{64}\), \(588^{96}\), \(592^{96}\), \(656^{192}\) \\
    18 & 1024 & \(45^{4}\), \(147^{12}\), \(512^{252}\), \(656^{756}\) \\
    \hline
    Total & 2046 & \(45^{6}\), \(98^{3}\), \(101^{3}\), \(147^{18}\), \(433^{63}\), \(434^{63}\), \(512^{378}\), \(588^{189}\), \(592^{189}\), \(656^{1134}\) \\
    \hline
    \end{tabular}
    \caption[Totally ramified Galois groups of degree 18 over \(\QQ_2\)]{Totally ramified Galois groups of degree 18 over \(\QQ_2\).}
    \label{gg-tbl-d18-q2}
\end{table}

Noting that \(L/T\) is Galois and \(T/\QQ_2\) has only the trivial automorphism, then \(\Aut(L/\QQ_2) \isom C_2\) and so each \(L/\QQ_2\) has 9 conjugates inside \(\bar\QQ_2\). The number of polynomials generated times 9 is equal to the number of extensions of degree 18 in \(\bar\QQ_2\), from which we deduce we have exactly one polynomial per isomorphism class.

\subsection{Degree 20 over \texorpdfstring{\(\QQ_2\)}{Q2}}
\label{gg-sec-d20}

As in \cref{gg-sec-d18}, we have generated all ramification polygons of totally ramified extensions \(L/\QQ_2\) of degree 20. For each we have produced a set of generating polynomials, 511,318 in total.

We have computed the Galois groups of all of these polynomials \(F(x)\), which required several parameterizations of our algorithm to cover all cases. This also occasionally required computing \(\Gal(F/K)\) where \(K = \QQ_2(\sqrt[3]2,\zeta_3)\), for which we can compute a more efficient global model than \(F/\QQ_2\), at the expense of some more group theory computation.

By \cite[Theorem 1]{Monge11} there are 259,968 isomorphism classes of such extensions \(L/\QQ_2\) so we have over-counted by a factor of about 2.

Average timings are given in \cref{gg-tbl-d12-q2} and counts of Galois groups are given in \cref{gg-tbl-d20-q2}. 

\begin{table}
    \centering
    \begin{tabular}{rp{24em}}
    \hline
    \# & Groups \\
    \hline
    511,318 & \(16^{4}\), \(18^{8}\), \(19^{6}\), \(20^{8}\), \(42^{48}\), \(61^{3}\), \(68\), \(77^{15}\), \(80^{15}\), \(129^{78}\), \(131^{90}\), \(132^{78}\), \(137^{90}\), \(173^{30}\), \(186^{120}\), \(189^{120}\), \(194^{180}\), \(195^{114}\), \(196^{180}\), \(261^{720}\), \(282^{90}\), \(305^{120}\), \(306^{105}\), \(309^{240}\), \(312^{240}\), \(317^{140}\), \(330^{35}\), \(332^{240}\), \(338^{140}\), \(351^{120}\), \(406^{630}\), \(411^{1440}\), \(416^{1440}\), \(417^{1440}\), \(419^{1440}\), \(420^{1440}\), \(422^{630}\), \(434^{720}\), \(435^{1440}\), \(437^{1440}\), \(441^{1440}\), \(443^{720}\), \(444^{562}\), \(447^{720}\), \(448^{720}\), \(449^{562}\), \(471^{85}\), \(472^{225}\), \(510^{1920}\), \(511^{2880}\), \(512^{1920}\), \(514^{1920}\), \(515^{2880}\), \(516^{2880}\), \(517^{1920}\), \(518^{2880}\), \(519^{1920}\), \(520^{1920}\), \(523^{1920}\), \(524^{1920}\), \(526^{1396}\), \(528^{840}\), \(529^{1920}\), \(530^{1920}\), \(632^{11520}\), \(633^{11520}\), \(634^{11520}\), \(678^{255}\), \(683^{675}\), \(847^{5760}\), \(850^{5760}\), \(851^{5760}\), \(854^{5760}\), \(906^{42240}\), \(907^{42240}\), \(908^{34560}\), \(909^{42240}\), \(910^{42240}\), \(911^{34560}\), \(946^{161280}\) \\
    \hline
    \end{tabular}
    \caption[Totally ramified Galois groups of degree 20 over \(\QQ_2\)]{Totally ramified Galois groups of degree 20 over \(\QQ_2\).}
    \label{gg-tbl-d20-q2}
\end{table}

\subsection{Degree 22 over \texorpdfstring{\(\QQ_2\)}{Q2}}
\label{gg-sec-d22}

As in \cref{gg-sec-d18}, we have generated all ramification polygons of totally ramified extensions \(L/\QQ_2\) of degree 22, these have vertices of the form
\[(1,J),(2,0),(22,0),\]
and for each we have produced a set of generating polynomials. Again, we have precisely one polynomial per isomorphism class, 8190 in total.

Timings with parameterizations B0 to B2 are given in \cref{gg-tbl-d12-q2} and counts of Galois groups are given in \cref{gg-tbl-d22-q2}.

\begin{table}
    \centering
    \begin{tabular}{rrl}
    \hline
    \(J\) & \# & Groups \\
    \hline
    1 & 2 & \(34\), \(35\) \\
    3 & 4 & \(34^{2}\), \(35^{2}\) \\
    5 & 8 & \(34^{4}\), \(35^{4}\) \\
    7 & 16 & \(34^{8}\), \(35^{8}\) \\
    9 & 32 & \(34^{16}\), \(35^{16}\) \\
    11 & 64 & \(6^{2}\), \(37^{62}\) \\
    13 & 128 & \(34^{32}\), \(35^{32}\), \(37^{64}\) \\
    15 & 256 & \(34^{64}\), \(35^{64}\), \(37^{128}\) \\
    17 & 512 & \(34^{128}\), \(35^{128}\), \(37^{256}\) \\
    19 & 1024 & \(34^{256}\), \(35^{256}\), \(37^{512}\) \\
    21 & 2048 & \(34^{512}\), \(35^{512}\), \(37^{1024}\) \\
    22 & 4096 & \(6^{4}\), \(37^{4092}\) \\
    \hline
    Total & 8190 & \(6^{6}\), \(34^{1023}\), \(35^{1023}\), \(37^{6138}\) \\
    \hline
    \end{tabular}
    \caption[Totally ramified Galois groups of degree 22 over \(\QQ_2\)]{Totally ramified Galois groups of degree 22 over \(\QQ_2\).}
    \label{gg-tbl-d22-q2}
\end{table}

\subsection{Degree 32 over \texorpdfstring{\(\QQ_2\)}{Q2}}
\label{gg-sec-d32}

Our algorithm can compute some non-trivial Galois groups of order 32. For example, consider \(F(x) = x^{16} + 32 x + 2\) which is Eisenstein with Galois group 16T1638 of index \(8=2^3\) in \(C_2^{\wr 4}\). Using A2, we find the Galois group of \(F(x^2)\) is 32T2583443 of index \(2^{10}\) in \(C_2^{\wr 5}\). This took about 125 seconds, which breaks down as follows.

\begin{center}
\begin{tabular}{lrr}
\hline
 & Run time (seconds) & Share of run time \\
\hline
Start resolvent algorithm & 23.28 & 18.6\% \\
Choose subgroup & 91.44 & 73.0\% \\
Compute resolvent & 1.39 & 1.1\% \\
Process resolvent & 6.84 & 5.5\% \\
Other & 2.37 & 1.9\% \\
\hline
Total & 125.32 & \\
\hline
\end{tabular}
\end{center}

Here, ``start resolvent algorithm'' includes initially factorizing the polynomial, finding the extensions defined by the factors, finding their ramification filtrations, and computing a corresponding global model. ``Choose subgroup'' means time spent by the subgroup choice algorithm choosing a subgroup \(U \subgrp W\) from which to form a resolvent. ``Compute resolvent'' is the time spent computing a resolvent \(R(x)\) given an invariant for the subgroup \(U\). ``Process resolvent'' is the time spent by the group theory algorithm deducing information about the Galois group from a resolvent, and so in particular includes finding the degrees of the factors of the resolvent and computing maximal preimages. ``Other'' is everything else, including intializing the group theory algorithm and computing invariants.

This used 104 resolvents in total: 82 of degree 2, 9 of degree 4, 7 of degree 8, 2 of degree 16 and 4 of degree 32. The maximum complex precision used was 4056 decimal digits.

The run time is dominated by time spent choosing subgroups \(U \subgrp W\), suggesting that this should be the focus for future improvement. The next most dominant part is time spent starting the resolvent algorithm, but this part is essentially independent of the Galois group. Very little time is spent actually computing resolvents, which is perhaps surprising given that this is the part spent using complex embeddings of global models.

\subsection{A special case of \texttt{SinglyWild}}
\label{gg-sec-impl-sw}

We have implemented \texttt{SinglyWild} in the special case \(p=2\) for totally wildly ramified extensions \(L/K\) which are Galois. Hence \(\Gal(L/K) \isom C_2^k\) where \((L:K)=p^k\).

We now define three more parameterizations C0, C1 and C2 which are the same as B0, B1 and B2 except that the \texttt{Symmetric} global model on the wild part is replaced by \texttt{SinglyWild}.

It is well-known (e.g. \cite[Ch. IV, \S2, Prop. 7]{SerLF}) that for such an extension \(L/K\) there is an injective group homomorphism \(\Gal(L/K) \to \FF_K^+\), and hence \(\Gal(L/K)\) is isomorphic to a subspace of \(\FF_K/\FF_p\). In particular, \((\FF_K : \FF_p) \ge k\) and so \(K/\QQ_p\) has residue degree at least \(k\).

Using the \texttt{pAdicExtensions} package \cite{extensionscode}, we have generated defining polynomials which between them generate all extensions of the form \(L/U/\QQ_2\) where \(U/\QQ_2\) is unramified of some degree and \(L/U\) is singly wildly ramified and Galois of some degree.

For example when \(k=2\) and \((U:\QQ_2)=4\), then the global model in C0 gives the overgroup \(W = C_2^2 \wr C_4\) of order \(2^{10}\), which is somewhat smaller than the overgroup \(W = S_4 \wr C_4\) of order \(2^{14} \cdot 3^4\) from B0.

We have run our algorithm with the 9 parameterizations A0 to C2 on these polynomials. Mean timings are given in \cref{gg-tbl-sw-q2}.

\begin{table}
    \newcommand{\postbox}{\makebox[0pt][l]}
    \newcommand{\prebox}{\makebox[0pt][r]}
    \newcommand{\z}[1]{\postbox{\textsuperscript{#1}}}
    \centering
    \begin{adjustbox}{max width=\textwidth}
    \begin{tabular}{rrrrrrrrrrrr}
    \hline
    Deg & \(k\) & \# & \multicolumn{9}{l}{Run time (seconds)} \\
    & & & A0 & B0 & C0 & A1 & B1 & C1 & A2 & B2 & C2 \\
    \hline
    8  & 2 & 4   & \bfseries 0.53 & \bfseries 0.56 & 0.61 & \bfseries 0.57 & \bfseries 0.57 & 0.66 & 0.62 & \bfseries 0.58 & 0.68 \\
    12 & 2 & 28  & 2.36 & 2.34 & 0.71 & 3.41 & 3.73 & \bfseries 0.64 & 4.57 & 4.79 & \bfseries 0.66 \\
    16 & 2 & 140 & \(\times\) & \(\times\) & 1.23 & 80.02\z1 & 23.27\z1 & \bfseries 0.95 & \(\times\) & \(\times\) & \bfseries 0.98 \\
    24 & 3 & 8     & \(\times\) & \(\times\) & \bfseries 12.55 & \(\times\) & \(\times\) & \bfseries 12.75 & \(\times\) & \(\times\) & \bfseries 12.42 \\
    32 & 3 & 120 & --- & --- & 40.49 & --- & --- & 31.34 & --- & --- & \bfseries 23.68 \\
    \hline
    \end{tabular}
    \end{adjustbox}
    \caption[Timings on polynomials using \texttt{SinglyWild} over \(\QQ_2\)]{Mean run times for a selection of parameterizations on polynomials defining fields of the form \(L/U/\QQ_2\) where \(U/\QQ_2\) is unramified and \(L/U\) is singly wildly ramified with Galois group \(C_2^k\). At degree 32, there were four polynomials which did not succeed due to a bug in Magma; these are not included in the mean.}
    \label{gg-tbl-sw-q2}
\end{table}

Except at degree 8, the C parameterizations are by far the quickest.

\section*{Acknowledgements}
This work was partially supported by a grant from GCHQ.

\mybibliography

\end{document}